\pgfplotsset{compat=1.10}
\newcommand{\ra}[1]{\renewcommand{\arraystretch}{#1}}
\Crefname{ALC@unique}{Line}{Lines}
  \newcommand{\funding}[1]{{\bf Funding:} #1}
  \newcommand{\email}[1]{\url{#1}}
\newtheorem{remark}{Remark}
\newtheorem{theorem}{Theorem}
\newtheorem{lemma}{Lemma}
\newcommand{\scinote}[3]{#1\text{e$#2$#3}}
\newcommand{\R}{\mathbf{R}}
\newcommand{\Z}{\mathbf{Z}}
\newcommand{\J}{\mathcal{J}}
\renewcommand{\O}{O}
\newcommand{\FL}{(-\Delta)^{\alpha/2}}
\renewcommand{\v}[1]{{\mathbf{#1}}}
\newcommand{\m}[1]{{\mathsf{#1}}}
\newcommand{\I}{\text{(I)}}
\newcommand{\II}{\text{(II)}}
\newcommand{\Id}{\text{(Id)}}
\newcommand{\IId}{\text{(IId)}}
\newcommand*\from{\colon}
  \newcommand{\comp}[1]{{#1}^\mathsf{c}}
\title{A simple solver for the fractional Laplacian in multiple dimensions}
\author{ Victor Minden\thanks{Institute for Computational and Mathematical Engineering, Stanford
    University, Stanford, CA 94305.  Current address: Center for Computational Biology, Flatiron
    Institute, Simons Foundation, New York, NY 10017
    (\email{vminden@flatironinstitute.org}). \funding{U.S. Department of Energy Advanced Scientific
      Computing Research program (grant number DE-FC02-13ER26134/DE-SC0009409).}}
  \and
  Lexing Ying\thanks{Department of Mathematics and Institute for Computational and Mathematical
    Engineering, Stanford University, Stanford, CA 94305
    (\email{lexing@stanford.edu}).\funding{National Science Foundation (grant number DMS-1521830)
      and U.S. Department of Energy Advanced Scientific Computing Research program (grant number
      DE-FC02-13ER26134/DE-SC0009409).}}  }
\begin{document}

\maketitle

\begin{abstract}
We present a simple discretization scheme for the hypersingular integral representation of the fractional Laplace operator and solver for the corresponding fractional Laplacian problem.  Through singularity subtraction, we obtain a regularized integrand that is amenable
to the trapezoidal rule with equispaced nodes, assuming a high degree of regularity in the underlying function (i.e., $u\in C^6(\R^d)$).  The resulting quadrature scheme gives a discrete
operator on a regular grid that is translation-invariant and thus can be applied quickly with the
fast Fourier transform.  For discretizations of problems related to space-fractional diffusion on
bounded domains, we observe that the underlying linear system can be efficiently solved via
preconditioned Krylov methods with a preconditioner based on the finite-difference (non-fractional)
Laplacian.  We show numerical results illustrating the error of our simple scheme as well the
efficiency of our preconditioning approach, both for the elliptic (steady-state) fractional
diffusion problem and the time-dependent problem.
\end{abstract}

\ra{1.4}

\section{Introduction}
\label{sec:intro}

Fractional powers of the Laplacian operator arise naturally in the study of anomalous diffusion,
where the fractional operator plays an analogous role to that of the standard Laplacian for ordinary
diffusion (see, e.g., the review articles by Metzler and Klafter \cite{METZLER20001,rest} and
V\'azquez \cite{vazquez}).  By replacing Brownian motion of particles with L\'evy flights
\cite{mandelbrot}, whose increments are drawn from the $\alpha$-stable L\'evy distribution for
$\alpha\in(0,2)$, we obtain a fractional diffusion equation (or fractional kinetic equation) in
terms of the fractional Laplacian operator of order $\alpha$ \cite{saichev}, defined for
sufficiently nice functions $u\from\R^d\to\R$ via the Cauchy principal value integral
\begin{align}\label{eq:fl}
\FL u(\v{x}) \equiv \text{p.v.}\int_{\R^d}  C_{\alpha,d}\left[\frac{u(\v{x})-u(\v{y})}{|\v{x}-\v{y}|^{d+\alpha}}\right]\,d\v{y}, \quad \v{x}\in\R^d,
\end{align}
with known normalizing constant $C_{\alpha,d}$ \cite{equiv}.

For a bounded domain $\Omega\subset~\R^d$ with complement $\comp{\Omega}\equiv\R^d\setminus\Omega$,
we consider fractional diffusion with homogeneous extended Dirichlet conditions given in terms of
\eqref{eq:fl} by
\begin{eqnarray}
\left\{
\begin{aligned}\label{eq:fracdiff}
\partial_t u(\v{x},t)&= -\FL u(\v{x},t) + f(\v{x},t), \qquad &&\v{x}\in\Omega, \quad &t>0,\\
u(\v{x},t) &= 0, \qquad&&\v{x}\in\comp{\Omega}, \quad &t>0,\\
u(\v{x},0) &= u_0(\v{x}), \qquad &&\v{x}\in\Omega.& 
\end{aligned}
\right.
\end{eqnarray}
Also of interest is the related elliptic problem
\begin{eqnarray}
\left\{
\begin{aligned}\label{eq:fracelliptic}
\FL u(\v{x})&=  f(\v{x}), \qquad &&\v{x}\in\Omega,\\
u(\v{x}) &= 0, \qquad &&\v{x}\in\comp{\Omega}.
\end{aligned}
\right.
\end{eqnarray}
Somewhat unintuitively, the nonlocality of \eqref{eq:fl} implies that the solutions of
\eqref{eq:fracdiff} and \eqref{eq:fracelliptic} depend on data prescribed everywhere outside
$\Omega$ \cite{Felsinger2015,ros-oton2016,delia}, though other definitions of the fractional
Laplacian on a bounded domain are also in common use \cite{vazquez}.  Further, a more general
formulation of fractional diffusion involves augmenting \eqref{eq:fracdiff} by incorporating
fractional time derivatives of Caputo or Riemann-Liouville type.  We focus in this work on the case of space-fractional diffusion and do not discuss the discretization of time-fractional differential operators, though the latter is of independent interest \cite{mclean,linxu,zeng,zengzhang,zhao}.

\subsection{Contribution}
The contribution of this paper is a simple discretization scheme for \eqref{eq:fracdiff} and
\eqref{eq:fracelliptic} on Cartesian grids, and an efficient algorithm for solving the resulting
linear systems.  The discretization generalizes easily to domains that can be represented as
occluded Cartesian grids, i.e., domains given by taking a regular grid and removing a subset of grid points and corresponding subdomains to obtain, e.g., an ``L''-shaped domain.

Our approach is based on using a Taylor expansion around each point $\v{x}$ to replace the singular
integrand in \cref{eq:fl} with a sufficiently smooth function of $\v{y}$ on all of $\R^d$ via
singularity subtraction.  The resulting integral can be easily discretized using the trapezoidal
rule on a regular grid of $N$ points, leading to a translation-invariant linear operator that can be
applied at a cost of $\O(N\log N)$ using the fast Fourier transform (FFT).  The resulting discrete
linear system approximating \eqref{eq:fracelliptic} can then be efficiently solved using standard
Krylov methods.  As $\alpha\to 2$, the resulting linear systems can exhibit the ill-conditioning
characteristic of discretizations of the Laplacian operator on a regular grid.  To circumvent this,
we develop an efficient preconditioning strategy based on the fact that our discrete fractional
Laplacian operator may be written as the sum of a standard finite-difference Laplacian and another
matrix with mostly small entries.

When the solution $u$ to $\eqref{eq:fracelliptic}$ is sufficiently smooth, standard results on
convergence of the trapezoidal rule and finite-difference operators imply that the error of our
approach for computing the fractional Laplacian at a point goes to zero as $\O(h^2)$, where $h$ is
the linear spacing between grid points, which we show in \cref{sec:discretization}.  In general,
however, the solution to the fractional Laplace problem on bounded domains is only
$\lfloor{\alpha/2}\rfloor$ times continuously differentiable \cite{bdreg}, leading to a natural
deterioration of the rate of convergence of our simple approach.

\subsection{Related work} 
A discretization scheme similar to that presented here appears in Pozrikidis
\cite{pozrikidis2016fractional}, though without discussion of accuracy or the importance of
windowing for singularity subtraction. Huang and Oberman \cite{huang-oberman,huang-oberman-new}
derive a scheme for the one-dimensional case based on singularity subtraction and finite-difference
approximation, but do not tackle the multidimensional case (see also Tian and Du \cite{tian-du}, Gao et al.\ \cite{gao}, and Duo, Van Wyk, and Zhang\cite{duo}).
Chen et al.\ \cite{Chen2014} consider a multidimensional discretization and fast preconditioners
based on multigrid, but their scheme uses the so-called ``coordinate fractional Laplacian'' that
takes a tensor product of one-dimensional operators and is not equivalent to \eqref{eq:fl} (see also
related finite-difference approaches with different operators \cite{Zhao2017,wangbasu,meer1,meer2}).

Other similar work on efficient solution of fractional Laplacian systems using fast preconditioned iterative
methods includes Pang and Sun \cite{pangmg} and Wang and collaborators \cite{wangsuperfast,fuwang,fu,fu2019}.  While limited to one spatial dimension, this work also exploits the structure of the discrete operator for fast matrix-vector products and preconditioned solves, and the latter line of work includes treatment of time-fractional operators.

Another family of approaches on discretizing the fractional Laplacian operator is based on finite elements \cite{acosta-fractional,ag,ag2,blp,bonito-diffusion,tiandu,petr}.  Compared to our scheme, such approaches are typically more amenable to general geometries (as is typical for finite elements) but are also more involved.  Other notable schemes for discretizing the fractional Laplacian based on different
ideas include work based on the Caffarelli-Silvestre extension \cite{cs-extension,Nochetto2015,hulili}, spectral approaches \cite{zayernouri,mao,bruno,xudarve}, and hybrid schemes \cite{agsiam}.  General references for fractional Laplacians on bounded domains include, e.g., Ros-Oton
\cite{ros-oton2016}, D'Elia and Gunzburger \cite{delia}, Felsinger \cite{Felsinger2015}, and Lischke
et al.\ \cite{lpg}.

\section{Spatial discretization of the fractional Laplacian}
\label{sec:discretization}

To begin, we outline our scheme for discretization of \eqref{eq:fl} in the one-dimensional case
where the function $u$ vanishes outside of some interval.  Following that, we give more details in
our discussion of the multidimensional case.

\subsection{Singularity subtraction in one dimension}
Concretely, consider the task of approximating the principal value integral
\begin{align}\label{eq:1dintegral}
\FL u(x)= \text{p.v.}\int_{-\infty}^\infty C_{\alpha,1}\left[\frac{u(x)-u(y)}{|x-y|^{1+\alpha}}\right]\,dy,
\end{align}
where $u(y) = 0$ for $|y|>1$. For $\alpha>1$, this integral is hypersingular due to the high-order
pole at $x=y$, which generally leads to large inaccuracies when simple quadrature schemes are
applied directly to \eqref{eq:1dintegral}.  Therefore, we proceed by regularizing the integrand to
remove the singularity and obtain an integral for which simple quadratures are accurate.

Assuming that the function $u$ is sufficiently smooth, we may write a Taylor series expansion about
the point $x$ to obtain
\begin{align}\label{eq:oddpart}
u(y) &= u(x) + \frac{1}{2}u''(x)(y-x)^2 + u_\text{odd}(y) + R_4(y),
\end{align}
where the smooth remainder $R_4(y) = O(|y-x|^4)$ as $y\to x$.  For brevity, we have grouped terms
that are odd about $x$ into $u_\text{odd}$,
as they will not play an explicit role in what follows.

Our regularization strategy is singularity subtraction based on adding and subtracting a calculable
integral that matches terms in the Taylor series.  Suppose $w$ is a sufficiently smooth windowing
function with compact support such that $w(0) = 1$ and $w(y)=w(-y)$.  Then we may write
\begin{align}\label{eq:twointegrals}
\FL u(x) = &C_{\alpha,1}\int_{-\infty}^\infty \frac{u(x)-u(y) + w(x-y)[\frac{1}{2}u''(x)(x-y)^2-u_\text{odd}(y)]}{|x-y|^{1+\alpha}}\,dy\\
&-C_{\alpha,1}\int_{-\infty}^\infty \frac{w(x-y)[\frac{1}{2}u''(x)(x-y)^2-u_\text{odd}(y)]}{|x-y|^{1+\alpha}}\,dy\nonumber\\
&\equiv \I + \II,\nonumber
\end{align}
where we define $\I$ to be the first integral and $\II$ to be the second.
By construction, $\I$ is no longer hypersingular, as we see from \eqref{eq:oddpart} that the integrand can be equivalently written
\begin{align*}\label{eq:integrand}
 \frac{[w(x-y)-1][\frac{1}{2}u''(x)(x-y)^2-u_\text{odd}(y)]+R_4(y)}{|x-y|^{1+\alpha}}.
\end{align*}
By our smoothness assumptions on $u$ and $w$, as $y\to x$ this integrand decays and is
continuously differentiable with a second derivative that is integrable.  This implies
that the standard trapezoidal rule would exhibit second-order convergence when applied to $\I$; see
Cruz-Uribe and Neugebauer \cite{trapz}.  Of course, this requires knowledge of $u''(x)$ and
$u_\text{odd}$ in general, which we do not assume.  In the context of discretization of the integral
using a uniform grid, however, the situation simplifies.

\subsection{The first integral in one dimension}
Consider discretizing $\I$ using the trapezoidal rule on a one-dimensional lattice $\{y_j\}_{j\in\Z}
= \{jh\}_{j\in\Z}$ and take $x = y_i$ to be one of the lattice points.  Without loss of generality,
we may shift the domain such that $x=y_0 = 0$.  This discretization yields the second-order accurate
approximation
\begin{align*}
\I &\approx C_{\alpha,1}h\sum_{j\ne 0}\left[\frac{u(0)-u(y_j) + w(y_j)[\frac{1}{2}u''(0)(y_j)^2-u_\text{odd}(y_j)]}{|y_j|^{1+\alpha}}\right]\\
&=C_{\alpha,1}h\left[\sum_{j\ne0} \frac{u(0)}{|y_j|^{1+\alpha}}  - \sum_{j\ne0} \frac{u(y_j)}{|y_j|^{1+\alpha}} + \frac{u''(0)}{2}\sum_{j\ne0}\frac{w(y_j)}{|y_j|^{\alpha-1}} - \sum_{j\ne 0} \frac{w(y_j) u_\text{odd}(y_j)}{|y_j|^{1+\alpha}} \right]\\
&= C_{\alpha,1}h\left[A_1u(0)  - \sum_{j\ne0} \frac{u(y_j)}{|y_j|^{1+\alpha}} + A_2u''(0)\right],
\end{align*}
where $A_1 = \sum_{j\ne 0} |y_j|^{-(1+\alpha)}$ and $A_2=\frac{1}{2}\sum_{j\ne 0} w(y_j)|y_j|^{1-\alpha}$ are
constants independent of $x$ and $j$ and the last sum in the second line is identically zero due to
oddness considerations.  We note that the sum remaining on the final line is over a finite range, as
$u$ is compactly supported.  Since $u$ is assumed to be smooth enough, we replace $u''(0)$ with the
finite-difference approximation
\begin{align*}
u''(0) \approx L_\text{FD}u(0) \equiv \frac{u(h) - 2 u(0) + u(-h)}{h^2},
\end{align*}
which gives our final approximation for $\I$,
\begin{align}
\I \approx C_{\alpha,1}h\left[A_1u(0)  - \sum_{j\ne0} \frac{u(y_j)}{|y_j|^{1+\alpha}} + A_2L_\text{FD}u(0)\right].
\end{align}

\subsection{The second integral in one dimension and final quadrature}
Having established a method for approximating the integral $\I$ in \eqref{eq:twointegrals}, we turn
to $\II$.  Again using oddness considerations, we see that the contribution from $u_\text{odd}$
vanishes such that
\begin{align*}
\II &= -\frac{C_{\alpha,1}u''(0)}{2}\int_{-\infty}^\infty \frac{w(y)}{|y|^{\alpha-1}}\,dy = C_{\alpha,1}hA_3u''(0),
\end{align*}
where the constant $A_3$ given by
\begin{align*}
A_3 = -\frac{1}{2h}\int_{-\infty}^\infty\frac{w(y)}{|y|^{\alpha-1}}\,dy
\end{align*}
 is well-defined (since $w$ is compactly supported) and we again take $x=0$ for convenience.  We
 once again replace the second derivative $u''(0)$ with its finite-difference approximation to
 obtain $ \II \approx C_{\alpha,1}hA_3L_\text{FD}u(0).$ Combining this with our quadrature for $\I$
 gives our approximation for $\FL u(0)$,
 \begin{align*}
 \FL u(0) &\approx C_{\alpha,1}h\left[A_1u(0)  - \sum_{j\ne0} \frac{u(y_j)}{|y_j|^{1+\alpha}} + (A_2+A_3)L_\text{FD}u(0)\right],
 \end{align*}
 which applies equally well not only to $x=0$ but in general to $x=y_i$ for any grid point $y_i$, i.e.,
 \begin{align}
\FL u(y_i) &\approx C_{\alpha,1}h\left[A_1u(y_i)  - \sum_{j\ne i} \frac{u(y_j)}{|y_i-y_j|^{1+\alpha}} + (A_2+A_3)L_\text{FD}u(y_i)\right].
 \end{align}
This is our final quadrature for the fractional Laplacian in one dimension.

\subsection{Singularity subtraction in higher dimensions}
We turn now to the multidimensional integral, i.e., \eqref{eq:fl} with $d=2$ or $d=3$.  Once again
we will assume that the function $u$ is compactly supported and sufficiently smooth, as we will make
explicit.  Our basic strategy is the same as in one dimension.

\begin{lemma}\label{lem:taylor}
Suppose that $u\in C^k(\R^d)$ and let $w\in C^p(\R)$ be a windowing function symmetric about $z=0$
such that $1-w(z) = \O(|z|^r)$ as $z\to0$.  Let the third-order Taylor approximation of $u$ about
the point $\v{x}\in\R^d$ be given in multi-index notation by
\begin{align}\label{eq:taylormulti}
u(\v{y}) &= \sum_{|\beta|\le 3} \frac{D^\beta u(\v{x})}{\beta!}(\v{y}-\v{x})^\beta + \sum_{|\tilde\beta| = 4} R_{\tilde\beta}(\v{y})(\v{y}-\v{x})^{\tilde\beta},
\end{align}
where the remainder is given in explicit form as
\begin{align*}
R_{\tilde\beta}(\v{y}) \equiv \frac{|\tilde\beta|}{\tilde\beta !}\int_0^1 (1-t)^{|\tilde\beta|-1}D^{\tilde\beta} u(\v{x} + t(\v{y}-\v{x}))\,dt.
\end{align*}
Then, defining the function
\begin{align}
\tilde u(\v{y}) &\equiv u(\v{y}) - u(\v{x}) - w(|\v{x}-\v{y}|)\sum_{1\le|\beta|\le 3} \frac{D^\beta u(\v{x})}{\beta!}(\v{y}-\v{x})^\beta,
\end{align}
we have that $\tilde u\in C^s(\R^d)$ and $D^\beta\tilde u(\v{y}) = \O(|\v{y}-\v{x}|^{t-|\beta|})$  as $\v{y}\to\v{x}$ for $s=\min(k-4,p)$, $t=\min(1+r,4)$, and $0\le|\beta|\le \min(s,t)$.
\end{lemma}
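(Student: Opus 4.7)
The plan is to substitute the Taylor expansion \eqref{eq:taylormulti} directly into the definition of $\tilde u$. The $|\beta|=0$ piece of the Taylor sum, namely $u(\v{x})$, cancels the $-u(\v{x})$ in the definition, and each monomial of degree $1\le|\beta|\le 3$ appears once with weight $1$ (from $u(\v{y})$) and once with weight $-w(|\v{x}-\v{y}|)$. This yields the key decomposition
\[
\tilde u(\v{y}) = \sum_{1\le|\beta|\le 3}\frac{D^\beta u(\v{x})}{\beta!}(\v{y}-\v{x})^\beta\bigl[1 - w(|\v{x}-\v{y}|)\bigr] + \sum_{|\tilde\beta|=4}R_{\tilde\beta}(\v{y})(\v{y}-\v{x})^{\tilde\beta},
\]
in which the first group vanishes to order $|\beta|+r\ge 1+r$ at $\v{x}$ and the second to order $4$, making the target exponent $t=\min(1+r,4)$ transparent.

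Smoothness on $\R^d\setminus\{\v{x}\}$ is immediate: there, $\v{y}\mapsto|\v{x}-\v{y}|$ is $C^\infty$, so $w(|\v{x}-\cdot|)\in C^p$, and differentiating under the integral using $D^{\tilde\beta}u\in C^{k-4}$ gives $R_{\tilde\beta}\in C^{k-4}$. Combined with the polynomial factors, $\tilde u\in C^{\min(k-4,p)}(\R^d\setminus\{\v{x}\})=C^s(\R^d\setminus\{\v{x}\})$. Promoting this to $\tilde u\in C^s(\R^d)$ reduces to showing $D^\beta\tilde u(\v{y})\to 0$ as $\v{y}\to\v{x}$ for $|\beta|\le\min(s,t)$, so that the extension $D^\beta\tilde u(\v{x}):=0$ is continuous and matches the partial-derivative definition obtained by integrating along lines from $\v{x}$. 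This reduces the whole proof to the pointwise decay statement.

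To prove the decay, apply the Leibniz rule term-by-term in the decomposition above. For a remainder term, $D^\beta[R_{\tilde\beta}(\v{y})(\v{y}-\v{x})^{\tilde\beta}]=\sum_{\mu\le\beta}\binom{\beta}{\mu}D^{\beta-\mu}R_{\tilde\beta}(\v{y})\,D^\mu(\v{y}-\v{x})^{\tilde\beta}$; here $D^{\beta-\mu}R_{\tilde\beta}$ is bounded since $|\beta-\mu|\le s\le k-4$, while $D^\mu(\v{y}-\v{x})^{\tilde\beta}=\O(|\v{y}-\v{x}|^{4-|\mu|})$, so each term is at worst $\O(|\v{y}-\v{x}|^{4-|\beta|})$. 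For a windowed-polynomial term, the Leibniz expansion produces $D^{\beta-\mu}(\v{y}-\v{x})^{\beta'}\cdot D^\mu(1-w(|\v{x}-\v{y}|))$, with the first factor $\O(|\v{y}-\v{x}|^{|\beta'|-|\beta-\mu|})$ and the second, via the chain rule together with the pointwise bound $1-w(z)=\O(|z|^r)$ and the evenness of $w$, $\O(|\v{y}-\v{x}|^{r-|\mu|})$; the product is $\O(|\v{y}-\v{x}|^{|\beta'|+r-|\beta|})\le\O(|\v{y}-\v{x}|^{1+r-|\beta|})$. Combining gives $D^\beta\tilde u(\v{y})=\O(|\v{y}-\v{x}|^{t-|\beta|})$ as claimed.

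The only nontrivial step is the upgrade of the scalar hypothesis $1-w(z)=\O(|z|^r)$ to the multivariate estimate $D^\mu[1-w(|\v{x}-\v{y}|)]=\O(|\v{y}-\v{x}|^{r-|\mu|})$. Because $w$ is even and $C^p$, its Taylor polynomial at $0$ contains only even powers, so $w(|\v{y}-\v{x}|)$ effectively depends on the smooth quantity $|\v{y}-\v{x}|^2$; the chain rule then transports the $\O(|z|^r)$ decay to each successive derivative within the available smoothness of $w$. Everything else is routine multi-index bookkeeping.
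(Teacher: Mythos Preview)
Your proposal is correct and follows essentially the same approach as the paper: both substitute the Taylor expansion \eqref{eq:taylormulti} into the definition of $\tilde u$ to obtain the same decomposition into a windowed-polynomial piece and a remainder piece, read off the vanishing order $t=\min(1+r,4)$ from the two summands, identify $R_{\tilde\beta}\in C^{k-4}$ and $w\in C^p$ as the smoothness-limiting factors, and then appeal to term-by-term differentiation via the product (Leibniz) rule for the derivative estimates. The paper's proof is considerably more terse---it dispatches the final step in a single sentence---whereas you spell out the Leibniz bookkeeping and explicitly flag the chain-rule estimate for $D^\mu[1-w(|\v{x}-\v{y}|)]$ as the one nontrivial point, but the underlying argument is identical.
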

\begin{proof}
It is clear that
\begin{align*}
\tilde u(\v{y}) &= (1 - w(|\v{x}-\v{y}|)) \sum_{1\le|\beta|\le 3} \frac{D^\beta u(\v{x})}{\beta!}(\v{y}-\v{x})^\beta + \sum_{|\tilde\beta| = 4} R_{\tilde\beta}(\v{y})(\v{y}-\v{x})^{\tilde\beta}.
\end{align*}
By inspection, the order of differentiability of $\tilde u(\v{y})$ is limited by that of
$w(|\v{x}-\v{y}|)$ and of $R_{\tilde\beta}(\v{y})$.  Given the explicit form of
$R_{\tilde\beta}(\v{y})$, it is at least in $C^{k-4}(\R^d)$ as a function of $\v{y}$, whereas $w \in
C^p(\R)$ by assumption.  Further, $\tilde u(\v{y}) = \O(|\v{y}-\v{x}|^t)$ for $t=\min(1+r,4)$, since
the first summand is $\O(|\v{y}-\v{x}|^{1+r})$ and the second summand is at least
$\O(|\v{y}-\v{x}|^{4})$.  Explicit term-by-term differentiation of $\tilde u(\v{y})$ with the
product rule concludes the proof.
\end{proof}

By subtracting off the windowed multivariate Taylor series we obtain an integral that is no longer
hypersingular.  In particular, we write
\begin{align}\label{eq:twointegralsmultid}
\FL u(\v{x}) = &C_{\alpha,d}\int_{\R^d} \frac{u(\v{x}) - u(\v{y}) + w(|\v{x}-\v{y}|)\sum_{1\le|\beta|\le 3} \frac{D^\beta u(\v{x})}{\beta!}(\v{y}-\v{x})^\beta}{|\v{x}-\v{y}|^{d+\alpha}}\,d\v{y}\\
&-C_{\alpha,d}\int_{\R^d} \frac{w(|\v{x}-\v{y}|)\sum_{1\le|\beta|\le 3} \frac{D^\beta u(\v{x})}{\beta!}(\v{y}-\v{x})^\beta}{|\v{x}-\v{y}|^{d+\alpha}}\,d\v{y}\nonumber\\
&\equiv \Id + \IId,\nonumber
\end{align}
where we define $\Id$ to be the first integral and $\IId$ to be the second.

\subsection{The first integral in higher dimensions}
To numerically approximate $\Id$ we use a quadrature rule on a uniform lattice
$\{\v{y}_\v{j}\}_{\v{j}\in\Z^d} = \{\v{j}h\}_{\v{j}\in\Z^d}$.  We assume the lattice is constructed
such that the point $\v{x}$ coincides with with some lattice point $\v{y}_\v{i}$, which we take to
be $\v{x}=\v{y}_\v{0}=\v{0}$ without loss of generality.

Replacing the integral with a weighted sum over the lattice, we obtain
\begin{align*}
\Id &\approx C_{\alpha,d}h^d\sum_{\v{j}\ne \v{0}}\frac{u(\v{y}_\v{i}) - u(\v{y}_\v{j}) + w(|\v{y}_\v{i}-\v{y}_\v{j}|)\sum_{1\le|\beta|\le 3} \frac{D^\beta u(\v{y}_\v{i})}{\beta!}(\v{y}_\v{j}-\v{y}_\v{i})^\beta}{|\v{y}_\v{i}-\v{y}_\v{j}|^{d+\alpha}}\\
&= C_{\alpha,d}h^d\sum_{\v{j}\ne \v{0}}\frac{u(\v{0})-u(\v{y}_\v{j}) + w(|\v{y}_\v{j}|)\sum_{1\le|\beta|\le 3} \frac{D^\beta u(\v{0})}{\beta!}(\v{y}_\v{j})^\beta}{|\v{y}_\v{j}|^{d+\alpha}},
\end{align*}
which we note does not include a term for $\v{j}=\v{0}$.  This corresponds to the standard
trapezoidal rule for $d=2$ and the punctured trapezoidal rule for $d=3$, though more involved
quadrature corrections may be used (see, e.g., Marin, Runborg and Tornberg \cite{marin}).  Assuming
$w$ is symmetric about the origin, we see that for many values of the multi-index $\beta$ the
corresponding summand vanishes due to oddness considerations.  Taking these symmetries into account,
we let $\v{e}_1^T\v{y}_\v{j}$ denote the first coordinate of $\v{y}_\v{j}$ and observe that
\begin{align*}
\sum_{\v{j}\ne \v{0}}\sum_{1\le|\beta|\le 3} \frac{ w(|\v{y}_\v{j}|) \frac{D^\beta u(\v{0})}{\beta!}(\v{y}_\v{j})^\beta	}{|\v{y}_\v{j}|^{d+\alpha}} &=\frac{\Delta u(\v{0})}{2}\sum_{\v{j}\ne \v{0}} \frac{w(|\v{y}_\v{j}|)(\v{e}_1^T\v{y}_\v{j})^2}{|\v{y}_\v{j}|^{d+\alpha}},
\end{align*}
which we plug back into our quadrature scheme to obtain
\begin{align}
\Id &\approx C_{\alpha,d}h^d\sum_{\v{j}\ne \v{0}}\frac{u(\v{0})-u(\v{y}_\v{j}) + \frac{\Delta u(\v{0})}{2} w(|\v{y}_\v{j}|)(\v{e}_1^T\v{y}_\v{j})^2}{|\v{y}_\v{j}|^{d+\alpha}}\nonumber\\
&=  C_{\alpha,d}h^d \left[\left(\sum_{\v{j}\ne \v{0}}\frac{1}{|\v{y}_\v{j}|^{d+\alpha}}\right) u(\v{0}) - \sum_{\v{j}\ne \v{0}}\frac{u(\v{y}_\v{j}) }{|\v{y}_\v{j}|^{d+\alpha}}   + \left(\frac{1}{2}\sum_{\v{j}\ne \v{0}} \frac{w(|\v{y}_\v{j}|)(\v{e}_1^T\v{y}_\v{j})^2}{|\v{y}_\v{j}|^{d+\alpha}}\right)\Delta u(\v{0})\right]\nonumber\\
&\nonumber \equiv C_{\alpha,d}h^d \left[A_{1,d} u(\v{0}) - \sum_{\v{j}\ne \v{0}}\frac{u(\v{y}_\v{j}) }{|\v{y}_\v{j}|^{d+\alpha}}   + A_{2,d}\Delta u(\v{0})\right],
\end{align}
with correspondingly defined constants
\begin{align}\label{eq:int1constants}
A_{1,d} \equiv \left(\sum_{\v{j}\ne \v{0}}\frac{1}{|\v{y}_\v{j}|^{d+\alpha}}\right), \quad A_{2,d}\equiv \left(\frac{1}{2}\sum_{\v{j}\ne \v{0}} \frac{w(|\v{y}_\v{j}|)(\v{e}_1^T\v{y}_\v{j})^2}{|\v{y}_\v{j}|^{d+\alpha}}\right).
\end{align}

\begin{theorem}\label{thm:thetheorem}
Suppose the same setup as \cref{lem:taylor} with $k=6$, $p=3$, and $r=3$ such that $t=4$ and $s=2$.
Assume further $u$ and $w$ are compactly supported with $0\le w(z)\le 1$ for all $z$.  Then the
above approximation for $\Id$ is second-order accurate.  That is,
\begin{align*}
&C_{\alpha,d}\int_{\R^d} \frac{u(\v{0}) - u(\v{y}) + w(|\v{y}|)\sum_{1\le|\beta|\le 3} \frac{D^\beta u(\v{0})}{\beta!}(\v{y})^\beta}{|\v{y}|^{d+\alpha}}\,d\v{y}\\
&=
C_{\alpha,d}h^d \left[A_{1,d} u(\v{0}) - \sum_{\v{j}\ne \v{0}}\frac{u(\v{y}_\v{j}) }{|\v{y}_\v{j}|^{d+\alpha}}   + A_{2,d}\Delta u(\v{0})\right] + \O(h^2),
\end{align*}
with $A_{1,d}$ and $A_{2,d}$ as in \eqref{eq:int1constants}.
\end{theorem}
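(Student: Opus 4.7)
The plan is to identify the quadrature as a punctured trapezoidal rule applied to the regularized integrand
\[
g(\v{y}) \equiv \frac{u(\v{0}) - u(\v{y}) + w(|\v{y}|)\sum_{1\le|\beta|\le 3} \tfrac{D^\beta u(\v{0})}{\beta!}\v{y}^\beta}{|\v{y}|^{d+\alpha}} = -\frac{\tilde u(\v{y})}{|\v{y}|^{d+\alpha}},
\]
where $\tilde u$ is as in \cref{lem:taylor} with $\v{x}=\v{0}$. The simplified form with $A_{1,d}$ and $A_{2,d}$ is equal to $h^d\sum_{\v{j}\ne\v{0}} g(\v{y}_\v{j})$ exactly (since on the reflection-symmetric lattice the $|\beta|=1,3$ summands and the mixed $|\beta|=2$ summands vanish by oddness), so the entire error is the trapezoidal-rule error for $g$.

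First, I would invoke \cref{lem:taylor} with $k=6$, $p=3$, $r=3$ so that $s=2$ and $t=4$, giving $\tilde u \in C^2(\R^d)$ together with $|D^\beta \tilde u(\v{y})| \lesssim |\v{y}|^{4-|\beta|}$ near the origin for $0\le|\beta|\le 2$. Since $u$ and $w$ are compactly supported, for $|\v{y}|$ larger than a fixed $R$ one has $u(\v{y})=0$ and $w(|\v{y}|)=0$, so $\tilde u(\v{y})=-u(\v{0})$ and $g(\v{y}) = u(\v{0})/|\v{y}|^{d+\alpha}$. Combining the near-origin and tail behavior with the $0\le w\le 1$ assumption, I would deduce that $g\in C^2(\R^d\setminus\{\v{0}\})$ with $|\nabla^2 g(\v{y})| \lesssim |\v{y}|^{2-d-\alpha}$ for small $|\v{y}|$ and $|\nabla^2 g(\v{y})| \lesssim |\v{y}|^{-d-\alpha-2}$ for large $|\v{y}|$, both integrable on $\R^d$ for $\alpha\in(0,2)$.

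Next, I would partition $\R^d$ into cubic cells $C_\v{j}\equiv \v{y}_\v{j}+[-h/2,h/2]^d$ and write the error as
\[
E = \int_{C_\v{0}} g(\v{y})\,d\v{y} + \sum_{\v{j}\ne\v{0}}\left[\int_{C_\v{j}} g(\v{y})\,d\v{y} - h^d g(\v{y}_\v{j})\right].
\]
The origin cell contributes $|\int_{C_\v{0}} g| \lesssim \int_0^{h\sqrt d/2} r^{3-\alpha}\,dr = O(h^{4-\alpha})=o(h^2)$ since $\alpha<2$. For each other cell, the second-order Taylor expansion of $g$ about $\v{y}_\v{j}$ has its linear term annihilated by the central symmetry of $C_\v{j}$, leaving a quadratic remainder bounded by $h^{d+2}\sup_{C_\v{j}}|\nabla^2 g|$.

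The main obstacle is controlling $\sum_{\v{j}\ne\v{0}} h^d \sup_{C_\v{j}}|\nabla^2 g|$ uniformly in $h$, because $\sup_{C_\v{j}}|\nabla^2 g|$ scales like $|\v{y}_\v{j}|^{2-d-\alpha}$ for cells close to the origin, which diverges as $\v{y}_\v{j}\to\v{0}$. To handle this, I would split the sum into cells with $|\v{y}_\v{j}|\le Ch$ (only $O(1)$ of them, since $|\v{y}_\v{j}|\ge h$ for $\v{j}\ne\v{0}$) and cells with $|\v{y}_\v{j}|>Ch$. In the former group, each per-cell error is at most $h^{d+2}\cdot h^{2-d-\alpha}=h^{4-\alpha}=o(h^2)$, with only $O(1)$ such cells. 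In the latter group, $|\nabla^2 g|$ varies by only a constant factor across each cell, so $h^d\sup_{C_\v{j}}|\nabla^2 g| \lesssim \int_{C_\v{j}}|\nabla^2 g|\,d\v{y}$, and summing yields a quantity bounded by $\|\nabla^2 g\|_{L^1(\R^d\setminus C_\v{0})}<\infty$. Multiplying by the $h^2$ prefactor from the Taylor remainder delivers the claimed $O(h^2)$ bound on the total error.
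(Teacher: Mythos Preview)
Your proposal is correct and follows essentially the same strategy as the paper: identify the quadrature error as the (punctured) trapezoidal error for $-\tilde u(\v{y})/|\v{y}|^{d+\alpha}$, use \cref{lem:taylor} to control second derivatives, show the contribution near the origin is $O(h^{4-\alpha})$, and bound the remaining per-cell errors by $h^{d+2}\sup|\nabla^2 g|$ summed against an integrable majorant.

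The packaging differs slightly. You use midpoint-centered cells $C_\v{j}=\v{y}_\v{j}+[-h/2,h/2]^d$ and a single split (the $O(1)$ cells with $|\v{y}_\v{j}|\le Ch$ versus the rest), finishing with an $L^1$ bound on $\nabla^2 g$. The paper instead uses vertex-centered trapezoidal cells and three annular regions $B_h$, $B_R\setminus B_h$, and $\R^d\setminus B_R$, invoking concavity of the power-law bound to pass from the Riemann sum to an integral in the mid range and a standard $h$-independent $C^2$ bound in the far region. Your $L^1$-of-Hessian argument is a bit more streamlined; the paper's three-region split makes the role of the compact supports of $u$ and $w$ more explicit. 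Both arguments rely on the same estimates $|D^\beta\tilde u|\lesssim|\v{y}|^{4-|\beta|}$ from \cref{lem:taylor} and the integrability of $|\v{y}|^{2-d-\alpha}$ near the origin for $\alpha<2$.
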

\begin{proof}
The described approximation is numerically equivalent to the (punctured) trapezoidal rule, so this
amouts to bounding the error of the trapezoidal rule applied in $d$ dimensions with integrand
$\tilde u(\v{y}) / |\v{y}|^{d+\alpha}$, where $\tilde u(\v{y})$ is as in \cref{lem:taylor} with
$\v{x}=\v{0}$.  Letting $R>h$ be such that both $u(\v{y})=0$ and $w(|\v{y}|)=0$ for $|\v{y}|> R$, we
proceed by breaking the integral into three contributions: one for the subdomain $B_h\equiv
[-h,h]^d$ ``near'' the singularity, one for the ``mid-range'' subdomain $B_R\setminus B_h
\equiv[-R,R]^d\setminus[-h,h]^d$, and one for the ``far'' subdomain $\R^d\setminus B_R$.  We write
\begin{align*}
\int_{\R^d} \frac{\tilde u(\v{y})}{|\v{y}|^{d+\alpha}}\,d\v{y} &= \int_{B_h} \frac{\tilde u(\v{y})}{|\v{y}|^{d+\alpha}}\,d\v{y} + \int_{B_R\setminus B_h} \frac{\tilde u(\v{y})}{|\v{y}|^{d+\alpha}}\,d\v{y}
+\int_{\R^d\setminus B_R} \frac{\tilde u(\v{y})}{|\v{y}|^{d+\alpha}}\,d\v{y},
\end{align*}
each piece of which we analyze separately.

Near the singularity, we see due to symmetry considerations that
\begin{align*}
&\int_{B_h} \frac{\tilde u(\v{y})}{|\v{y}|^{d+\alpha}}\,d\v{y} = \sum_{1\le|\beta|\le 3} \frac{D^\beta u(\v{0})}{\beta!}\int_{B_h} \frac{(1 - w(|\v{y}|))(\v{y})^\beta}{|\v{y}|^{d+\alpha}}\,d\v{y} + \sum_{|\tilde\beta| = 4} \int_{B_h}\frac{R_{\tilde\beta}(\v{y})(\v{y})^{\tilde\beta}}{|\v{y}|^{d+\alpha}}\,d\v{y}\\ &=  \frac{\Delta u(\v{0})}{2}\int_{B_h} \frac{(1 - w(|\v{y}|))(\v{e}_1^T\v{y})^2}{|\v{y}|^{d+\alpha}}\,d\v{y} + \sum_{|\tilde\beta| = 4} \int_{B_h}\frac{R_{\tilde\beta}(\v{y})(\v{y})^{\tilde\beta}}{|\v{y}|^{d+\alpha}}\,d\v{y}= O(h^{4-\alpha}),
\end{align*}
where under our assumptions the integrands are both $O(h^{4-d-\alpha})$ and $\Delta u$ is bounded.
Since $\tilde u(\v{y}) = O(|\v{y}|^t)$, we see $\frac{\tilde u(\v{y})}{|\v{y}|^{d+\alpha}} =
O(|\v{y}|^{t-d-\alpha})$, which implies that the corresponding (punctured) trapezoidal rule
approximation to the integral is $O(h^{t-\alpha})$, since we gain a factor of $h^d$ due to the
quadrature weights.  Therefore, the contribution to the error from the integral over the near
subdomain is $O(h^{4-\alpha}) = O(h^2)$, since $\alpha\in(0,2)$.

In the mid-range subdomain, we explicitly use the composite nature of the trapezoidal rule to write 
\begin{align*}
\int_{B_R\setminus B_h} \frac{\tilde u(\v{y})}{|\v{y}|^{d+\alpha}}\,d\v{y} &= \sum_{\ell} \int_{\Omega_\ell} \frac{\tilde u(\v{y})}{|\v{y}|^{d+\alpha}}\,d\v{y},
\end{align*}
and then consider the error of the trapezoidal rule in approximating the integral over each
$\Omega_\ell$ separately, where the square/cubic subdomains $\{\Omega_\ell\}$ in the trapezoidal
rule are pairwise disjoint and are such that $\bigcup_{\ell}\Omega_\ell = B_R\setminus B_h$.  Since
we are away from the origin, on each subdomain $\Omega_\ell$ the integrand
$\phi(\v{y})\equiv\frac{\tilde u(\v{y})}{|\v{y}|^{d+\alpha}}$ is in $C^2(\Omega_\ell)$ which means
the standard error bound for the trapezoidal rule on $\Omega_\ell$ gives an error contribution of no
more than $C h^{d+2}\sum_{|\beta|=2} \|D^\beta \phi\|_{L_\infty(\Omega_\ell)}$ for some constant $C$
independent of $h$.  However, the term $\|D^\beta \phi\|_{L_\infty(\Omega_\ell)}$ does depend on
$h$.  Since $D^\beta\tilde u(\v{y}) = O(|\v{y}|^{t-|\beta|})$ from \cref{lem:taylor}, the product
rule gives $D^\beta \phi(\v{y}) = O(1+|\v{y}|^{t-|\beta|-d-\alpha})$.  With this we can bound the
total error on $\R^d\setminus B_h$ as
\begin{align*}
\sum_\ell C h^{d+2}\sum_{|\beta|=2} \|D^\beta \phi\|_{L_\infty(\Omega_\ell)} &\le C'h^{d+2}\sum_\ell\|1+|\v{y}|^{t-2-d-\alpha}\|_{L_\infty(\Omega_\ell)}\\
&\le C'' h^2\left(1 + \int_0^R r^{t-3-\alpha}\,dr\right) = C''' h^2,
\end{align*}
where we have bounded 
\begin{align*}
h^d\sum_\ell \|1+|\v{y}|^{t-2-d-\alpha}\|_{L_\infty(\Omega_\ell)} \le c\int_{B_R}(1 + |\v{y}|^{t-2-d-\alpha})\,d\v{y} + c'
\end{align*}
(up to some geometry-dependent factors that are independent of $h$) due to concavity of the summand.
Therefore, the error contribution from the mid-range subdomain is $O(h^2)$.

Finally, for the far subdomain, we observe that the integrand is in $C^2(\R^d\setminus B_R)$ and its
smoothness is independent of $h$ in this region, so the standard composite trapezoidal error bound
of $O(h^2)$ applies.  Therefore, the overall error is $O(h^2)$.
\end{proof}
\begin{remark}
Being based on singularity subtraction via Taylor series expansion, the theoretical results in \cref{lem:taylor} and \cref{thm:thetheorem} apply directly only for relatively smooth functions $u$.  As discussed, however, it is known that in the general case solutions to \eqref{eq:fracelliptic} exhibit only mild H\"older regularity on the whole space but typically better regularity on $\Omega$ (i.e., $u\in C^{0,\alpha/2}(\R^d)$ but $u$ is more regular than $f$ on $\Omega$)\cite{bdreg}.  This lack of regularity across the boundary of $\Omega$ substantially complicates error analysis of any translation-invariant numerical approach such as is presented here.
\end{remark}

While smoothness is not generally a property of solutions to \eqref{eq:fracelliptic}, examples can be concocted.
	For example, inside the unit ball $B\equiv\{\v{x} \mid |\v{x}|^2 \le 1\} \subset \R^d$ one family of smooth solutions is given by observing that for $q>0$ and $s\in(0, 1)$ we have
	\begin{align*}
	(-\Delta)^{-s} \left[ (1-|\v{x}|^2)^q_+\right] &= K\times{}_2{F}_1\left(\frac{d}{2}- s, -q-s; \frac{d}{2}; |\v{x}|^2\right), \; |\v{x}| \le 1
	\end{align*}
	for known constant $K$\cite[eq. 9]{barenblatt}, where ${}_2{F}_1$ is the Gauss hypergeometric function \cite{abramowitz1964handbook}.  Applying the negative Laplacian to either side and letting $s=1-\alpha/2$ we see
	\begin{align*}
	\FL \left[ (1-|\v{x}|^2)^q_+\right] &= -\Delta\left[K\times{}_2{F}_1\left(\frac{d+\alpha}{2} - 1, \frac{\alpha}{2}-q-1; \frac{d}{2}; |\v{x}|^2\right)\right], \; |\v{x}| \le 1.
	\end{align*}
	This gives a family of smooth solutions to \eqref{eq:fracelliptic} on $B$, and related formulas can be used to obtain $\FL \left[ (1-|\v{x}|^2)^q_+\right]$ for $|\v{x}|>1$ (and thus to extend the problem domain beyond $B$).  Beyond such examples, the theoretical accuracy of \cref{thm:thetheorem} is chiefly useful when studying the fractional Laplacian \emph{forward operator} applied to smooth functions.  That said, in \cref{sec:results} we empirically observe linear convergence of the solution to \eqref{eq:fracelliptic} for $\alpha > 1$.

\subsection{The second integral in higher dimensions and final quadrature}
We now consider the second integral $\IId$ in \eqref{eq:twointegralsmultid}.  Assuming without loss
of generality that $\v{x} =0$ and using symmetry and oddness considerations as before, we see that
\begin{align*}
\IId &= -C_{\alpha,d}\int_{\R^d} \sum_{1\le|\beta|\le 3} \frac{w(|\v{y}|)\frac{D^\beta
    u(\v{0})}{\beta!}(\v{y})^\beta}{|\v{y}|^{d+\alpha}}\,d\v{y}= -\frac{C_{\alpha,d}\Delta
  u(0)}{2}\int_{\R^d} \frac{w(|\v{y}|)(\v{e}_1^T\v{y})^2}{|\v{y}|^{d+\alpha}}\,d\v{y}.
\end{align*}
Defining the constant
\begin{align}\label{eq:a3}
A_{3,d} &\equiv -\frac{h^{-d}}{2}\int_{\R^d} \frac{w(|\v{y}|)(\v{e}_1^T\v{y})^2}{|\v{y}|^{d+\alpha}}\,d\v{y}
\end{align}
and combining this with our quadrature for $\Id$ gives
\begin{align*}
\FL u(\v{0}) \approx C_{\alpha,d}h^d \left[A_{1,d} u(\v{0}) - \sum_{\v{j}\ne \v{0}}\frac{u(\v{y}_\v{j}) }{|\v{y}_\v{j}|^{d+\alpha}}  + (A_{2,d} + A_{3,d})\Delta u(\v{0})\right] 
\end{align*}
or, more generally,
\begin{align*}
\FL u(\v{y}_\v{i}) \approx C_{\alpha,d}h^d \left[A_{1,d} u(\v{y}_\v{i}) - \sum_{\v{j}\ne \v{i}}\frac{u(\v{y}_\v{j}) }{|\v{y}_\v{i}-\v{y}_\v{j}|^{d+\alpha}}   + (A_{2,d} + A_{3,d})\Delta u(\v{y}_\v{i})\right]. 
\end{align*}
Of course, as written this approximation requires second derivative information in the form of
$\Delta u(\v{y}_\v{i})$.  For smooth $u$, however, we may replace this with a finite-difference
stencil involving the neighbors of $\v{y}_{i}$ in the lattice,
\begin{align*}
\Delta u(\v{y}_\v{i}) \approx L_\text{FD}u(\v{y}_\v{i}) \equiv \frac{1}{h^2}\left(\sum_{\|\v{i}-\v{j}\|_1 = 1} u(\v{y}_\v{j})  - 2^d u(\v{y}_\v{i})\right),
\end{align*}
just as in the one-dimensional case.

\subsection{Summary of quadrature for fractional Laplacian}
We briefly summarize our complete approach for discretizing the fractional Laplacian applied to a
function $u$.  First, we regularize the integrand of \eqref{eq:fl} by adding to the numerator a
windowed Taylor series approximation of $u$ about $\v{x}$ with window function $w$ to obtain $\Id$
in \eqref{eq:twointegralsmultid}.  This gives an integral that is nice enough to admit
discretization with the trapezoidal rule or related schemes.  Then, by exploiting symmetries of the
problem, we rewrite the discretization in terms of the constants $A_{1,d}$ and $A_{2,d}$ in
\eqref{eq:int1constants}, which do not depend on $u$.  Finally, we derive an expression for the
correction term $\IId$ in terms of another constant $A_{3,d}$ given in \eqref{eq:a3}, which when
combined with $\Id$ and a finite-difference stencil approximation gives a nice expression for $\FL
u(\v{y}_\v{i})$ as a linear function of $u$ evaluated on a regular grid.

A few details of the procedure remain to be discussed.  First, there are a number of possibilities
for the windowing function $w$.  In this paper, we use the piecewise-polynomial window
\begin{align}\label{eq:window}
w(r) = W_\delta(r) \equiv  \left\{\begin{array}{ll} 1 - 35\left(\frac{r}{\delta}\right)^4 + 84\left(\frac{r}{\delta}\right)^5 - 70\left(\frac{r}{\delta}\right)^6 + 20\left(\frac{r}{\delta}\right)^7, & r < \delta, \\ 0, & \text{else}. \end{array} \right.
\end{align}
Of course, this is by no means the only sufficiently smooth choice.  Further, we note that the
requirement that $w$ be compactly supported can be relaxed so long as $W$ decays sufficiently
quickly as $r\to\infty$ such that the necessary integrals and sums may be computed.

On that note, we also must still compute the constants $A_{1,d}$, $A_{2,d}$, and $A_{3,d}$.  For our
choice of polynomial window, the integral defining $A_{3,d}$ can be computed explicitly; for other
choices the integral may be numerically computed to high precision offline using, e.g., adaptive
quadrature in MATLAB.  For compactly supported $w$, the sum defining $A_{2,d}$ has a finite number of
nonzero terms and is easily computable.  Finally, the infinite lattice sum $A_{1,d}$ is given in
terms of the Riemann zeta function for $d=1$ and may otherwise be well-approximated using far-field
compression techniques related to the fast multipole method (FMM) \cite{fastmultipole,kifmm}.  We
use Chebyshev polynomials for far-field compression in the vein of Fong and Darve \cite{bbfmm},
though we do not require the full FMM machinery as we are interested only in the lattice sum and not
a full approximate operator.

We remark that the analysis of this section gives a bound for the ``apply error'' when the approximate operator is applied to an appropriately smooth function.  While we use standard regularity assumptions to prove convergence of the finite-difference quadrature approximation, such regularity does not hold in general for solutions to \eqref{eq:fracelliptic}, particularly near the boundary $\partial\Omega$ \cite{rostongen}.  Thus, these results do not apply directly to the ``solve error'' (error in approximating $u$), and in practice we expect lower rates of convergence for the solve error, as we explore numerically in \cref{sec:results}.

\section{Solving the fractional differential equations on a bounded domain}
Having developed our trapezoidal rule scheme for evaluating \eqref{eq:fl} given $u$, we turn now to
the fractional differential equations \eqref{eq:fracdiff} and \eqref{eq:fracelliptic} concerning
fractional diffusion on a bounded domain $\Omega$ with homogeneous extended Dirichlet conditions.
We focus on the case $\Omega=[0,1]^d$ for ease of exposition.

\subsection{The elliptic case: steady-state fractional diffusion}\label{sec:elliptic-disc}
To solve the elliptic problem \eqref{eq:fracelliptic}, we discretize $\Omega$ using a regular grid
of $N=(n-1)^d$ points $\{\v{y}_\v{j}\}$ with linear spacing $h=\frac{1}{n+1}$, where $\v{j} = (j_1,
\dots, j_d)$ and $\v{y}_\v{j} = h \v{j}.$ For notational convenience, we define the index set
$\J\equiv [n]^d\subset \Z^d$.  Then, replacing the fractional Laplacian with our quadrature-based
approximation gives
\begin{align}\label{eq:linearelliptic}
  C_{\alpha,d}h^d \left[A_{1,d} u_\v{i} - \sum_{\substack{\v{j}\in\J\\ \v{j}\ne\{\v{i}\}}}\frac{u_\v{j} }{|\v{y}_\v{i}-\v{y}_\v{j}|^{d+\alpha}}   + (A_{2,d} + A_{3,d})L_\text{FD}u_\v{i}\right] &= f(\v{y}_\v{i})\quad \forall \v{i}\in\J,
\end{align}
which is a linear system to be solved for the variables $\{u_\v{j}\} \approx \{u(\v{y}_\v{j})\}$.
We remark that the ``boundary conditions'' affect the system in two ways.  First, the center sum has
been reduced from an infinite number of terms (in general) to a more manageable finite sum.  Second,
evaluating the finite-difference stencil $L_\text{FD}$ for $\v{i}$ near the boundary of the domain
will require the prescribed value of $u(\v{y})$ on the boundary, as in the standard (non-fractional)
case.

We write \eqref{eq:linearelliptic} in matrix form as
\begin{align}\label{eq:matelliptic}
  \m{M} \v{u} = \v{f},
\end{align}
where now $\v{u}\in\R^N$ and $\v{f}\in\R^N$ are vectors with corresponding entries $\{u_\v{j}\}$ and
$\{f(\v{y}_\v{j})\}$ and $\m{M}\in\R^{N\times N}$ contains the coefficients implied by
\eqref{eq:linearelliptic}.

\subsubsection*{Forward operator and application with FFT}
By construction, the approximate fractional Laplacian operator involved in \eqref{eq:linearelliptic}
is translation-invariant, which means that the matrix $\m{M}$ is block Toeplitz with Toeplitz blocks
(BTTB) under any natural ordering of the unknowns.  As is well known, this in turn implies that
$\m{M}$ may be applied efficiently using the FFT at a cost of $O(N \log N)$ FLOPs per application
and stored with storage cost $O(N)$.

Further, investigation of the constants $A_{1,d}$, $A_{2,d}$ and $A_{3,d}$ reveal that $\m{M}$ is
symmetric positive definite.  When coupled with the previous observatiton, this leads naturally to
the use of the conjugate gradient method (CG) \cite{cg} or related iterative methods for solving
\eqref{eq:matelliptic}.  However, while the FFT ensures low complexity per iteration, the number of
iterations required to achieve a specified iteration can be large unless an effective preconditioner
is used. This is of particular concern as $\alpha \to 2$, whereupon we recover the standard
(ill-conditioned) Laplacian.

\subsubsection*{Preconditioning: Laplacian pattern and fast Poisson solver}

To construct an efficient preconditioner for \eqref{eq:matelliptic}, we observe that $\m{M}$ may be
decomposed as the sum of two matrices $\m{M}=C_{\alpha,d}h^d(\m{K}+\m{L})$, where
\begin{align*}
\m{K}_{\v{i}\v{j}} &= \left\{\begin{array}{cc} -\frac{1}{|\v{y}_\v{i} -\v{y}_\v{j}|^{d+\alpha}},& \v{i}\ne \v{j},\\ A_{1,d}, & \v{i} = \v{j}, \end{array} \right.\quad \text{and}\quad
\m{L}_{\v{i}\v{j}} = \left\{\begin{array}{rr} \frac{(A_{2,d}+A_{3,d})}{h^2},& \|\v{i}-\v{j}\|_1 = 1,\\ -\frac{2^d(A_{2,d}+A_{3,d})}{h^2}, & \v{i} = \v{j},\\
0, & \text{else}, \end{array} \right.
\end{align*}
and we note that $A_{2,d} + A_{3,d} < 0$.  The sparse matrix $\m{L}$ is (up to a proportionality
constant) the typical finite-difference approximation of the negative Laplacian, whereas the matrix
$\m{K}$ has entries that quickly decay away from $\v{i}=\v{j}$, particularly for larger $\alpha$.  This motivates using $\m{L}$ itself as a preconditioner when using CG to solve \eqref{eq:matelliptic}.  Because $\m{L}$ is
effectively a finite-difference discretization of Poisson's equation on a regular grid with
homogeneous Dirichlet boundary conditions, application of $\m{L}^{-1}$ may be accomplished with the
FFT at a cost of $O(N\log N)$ using typical fast Poisson solver techniques \cite[Chapter
  12]{iserles1996first}.  For non-rectangular domains, the FFT-based approach is no longer feasible,
but the same preconditioner can be used with, e.g., nested dissection \cite{nd} or related
methods.

We remark that other choices of preconditioner are possible.  For example, rather than using
$\m{L}^{-1}$ as our preconditioner we could instead use $\widetilde{\m{M}}^{-1}$, where
$\widetilde{\m{M}}_{\v{i}\v{j}} = \m{M}_{\v{i}\v{j}}$ if $\m{L}_{\v{i}\v{j}}\ne0$ and zero
otherwise.  Preliminary experiments with this approach (not shown) did not seem to show measurable
benefit.

\subsection{The time-dependent case: time-dependent fractional diffusion}\label{sec:time}
We turn now to the full time-dependent problem \eqref{eq:fracdiff}.  For spatial discretization we
use the approximate fractional Laplacian just as in \cref{sec:elliptic-disc}, which we combine with
a Crank-Nicolson scheme for the discretization of temporal derivatives.  This leads to the implicit
time-stepping method
\begin{align}\label{eq:timestepping}
\left(\m{I} + \frac{\Delta t}{2}\m{M}\right)\v{u}^{(k+1)} &= \left(\m{I} - \frac{\Delta t}{2}\m{M}\right)\v{u}^{(k)} + \frac{\Delta t}{2}\left(\v{f}^{(k+1)} + \v{f}^{(k)} \right)
\end{align}
to be solved for $\v{u}^{(k+1)}\in\R^N$, where $\m{M}$ is as in \cref{sec:elliptic-disc} and now
$\v{u}^{(k)}\in\R^N$ and $\v{f}^{(k)}\in\R^N$ have entries $\{u^{(k)}_\v{j}\} \approx
\{u(\v{y}_\v{j}, t_k)\}$ and $\{f^{(k)}_\v{j}\} = \{f(\v{y}_\v{j}, t_k)\}$ for $t_k=k\Delta t$.

Just as in \cref{sec:elliptic-disc}, we exploit BTTB structure to apply $\m{M}$ such that
\eqref{eq:timestepping} may be solved efficiently with CG at each time step.  Compared to the
steady-state problem, the system matrix $\left(\m{I} + \frac{\Delta t}{2}\m{M}\right)$ here is much
better conditioned due to the addition of the identity.  However, we still find that the number of
iterations is reduced substantially via preconditioning, where we use the matrix $\m{I}+
\frac{\Delta t}{2}C_{\alpha,d}h^d\m{L}$ as preconditioner.

\section{Numerical results}
\label{sec:results}

To demonstrate and profile our approach to discretizing and solving fractional diffusion problems on
bounded Cartesian domains, we implemented a number of examples. All computations were performed in MATLAB R2017a on a 64-bit Ubuntu
laptop with a dual-core Intel Core i7-7500U processor at 2.70 GHz and 16GB of RAM.  All reported timings are in seconds.

  \subsection{Elliptic examples in one dimension}

\subsubsection*{A relatively smooth solution}
We begin with a one-dimensional elliptic example on the interval $\Omega=[-1,1]$, discretizing and
solving \eqref{eq:fracelliptic} with right-hand side
\begin{align}\label{eq:f_known}
  f(x) = {}_2{F}_1\left(\frac{1+\alpha}{2}, -2; \frac{1}{2};x^2\right).
\end{align}
In this case,
the analytic solution on $\Omega$ is known and is given up to a known constant of proportionality
$K_{\alpha}$ by $u(x) = K_{\alpha}^{-1}\, (1-x^2)^{2+\frac{\alpha}{2}}$ \cite{barenblatt}.  We observe that this solution is relatively smooth when extended to $\R$ due to vanishing second derivatives as $x\to\pm1$.

We discretize the interval  $\Omega$ with regularly-spaced points as in \eqref{eq:linearelliptic} with $d=1$,  choosing
$\delta$ in \eqref{eq:window} as a function of the number of discretization points $N$, such that
$w$ is supported inside a ball with a radius of 20 discretization points.  The time $t_\text{con}$ to construct the discrete operator $\m{M}$ is less than $3 \text{ms}$ in all cases for the one-dimensional case.

Using the known solution
$u(x)$ for right-hand side (RHS) \eqref{eq:f_known}, we measure the apply error of our discretization as
$e_\text{app}\equiv{\|\m{M}\v{u}_\text{true} - \v{f}\|}/{\|\v{f}\|},$ where $\v{u}_\text{true}$ is
the analytic solution sampled on the discrete grid points and $\v{f}$ is the discretized RHS.  To
demonstrate the solution error of our discretization scheme we take the same RHS as before and use
CG to solve the resulting linear system \eqref{eq:matelliptic}.  This gives a discrete solution
$\v{u}$ that we can compare to $\v{u}_\text{true}$ by computing the relative solution error
$e_\text{sol} \equiv {\|\v{u} - \v{u}_\text{true}\|}/{\|\v{u}_\text{true}\|}.$ These metrics are all
shown in \cref{tab:1D_con} for four different choices of $\alpha$, with correponding plots
in \cref{fig:1derrorplots}.  For convenience, at the bottom of \cref{tab:1D_con} we give an estimate of the asymptotic decay rate of the error as $N$ is increased, given by a least-squares fit of the log-error to log $N$.

We show in \cref{tab:1D_cg} the runtime $t_\text{CG}$ and iterations $n_\text{CG}$ required by CG to
solve the linear system \eqref{eq:matelliptic} for two different choices of  relative $\ell_2$-norm residual tolerance
$\epsilon_\text{res}$.  We give results and approximate rates of runtime growth for both the
preconditioned system (where the preconditioner $\m{L}$ is a finite-difference Laplacian as
described in \cref{sec:elliptic-disc}) and the unpreconditioned system.  Because this is a
one-dimensional problem, use of a fast Poisson solver to apply $\m{L}^{-1}$ is not stricly necessary
for efficiency.  Instead, we use a sparse Cholesky factorization, with negligible overhead. The
corresponding timing results are plotted in \cref{fig:2d3d} (left), where we see that our simple
preconditioning scheme is effective for reducing the time to solution, especially for larger
$\alpha$.

\subsubsection*{A less smooth solution}
As a second one-dimensional example, we follow Huang and Oberman \cite[Section 7]{huang-oberman-new} and take  a RHS corresponding to $f(x)=1$.  This leads to an analytic solution on $\Omega$ given by (up to known constant $K'_\alpha$)
\begin{align}\label{eq:lesssmooth}
u(x) = K'_\alpha (1-x^2)^{\alpha/2},
\end{align}
which when extended to $\R$ is only continuous as $x\to\pm1$, in contrast to the previous example.  

As in Huang and Oberman, applying the discrete forward operator $\m{M}$ to \eqref{eq:lesssmooth} is inaccurate near the boundary due to the lack of differentiability (not shown).
 However, taking $\v{f}=1$ as the RHS in the discretization of \eqref{eq:fracelliptic}, we still observe steady convergence of the relative solution error $e_\text{sol}$ as $N$ increases in \cref{tab:1D_erronly2}, though due to reduced regularity of the solution the observed rate of convergence deteriorates to $O(N^\gamma)$ with $\gamma\approx \min(1,1/2+\alpha/2)$.

\begin{table}
  \small
  \ra{1.0}
  \centering
  \caption{Relative apply and solve
    errors for $\alpha\in\{0.75,1.25,1.50,1.75\}$ for the one-dimensional elliptic example with right-hand side \eqref{eq:f_known}.  The last row gives an estimate of the rate of growth as $N$ is increased, i.e., $\gamma$ in $\O(N^\gamma)$.\label{tab:1D_con}}
  \begin{tabular}{@{}@{\extracolsep{-1.25pt}}ccccccccc@{}}\toprule
    \multicolumn{1}{c}{ $N$} &\multicolumn{1}{c}{$e_\text{app,0.75}$}
    &\multicolumn{1}{c}{$e_\text{app,1.25}$} &\multicolumn{1}{c}{$e_\text{app,1.5}$} &
    \multicolumn{1}{c}{$e_\text{app,1.75}$} & \multicolumn{1}{c}{$e_\text{sol,0.75}$}&\multicolumn{1}{c}{$e_\text{sol,1.25}$}
    &\multicolumn{1}{c}{$e_\text{sol,1.5}$} & \multicolumn{1}{c}{$e_\text{sol,1.75}$}\\ \midrule
    
    $511$   &$\scinote{2.1}{-}{07}$& $\scinote{3.6}{-}{06}$  & $\scinote{9.5}{-}{06}$  & $\scinote{2.0}{-}{05}$ &$\scinote{1.2}{-}{08}$& $\scinote{1.6}{-}{07}$& $\scinote{5.5}{-}{07}$&$\scinote{2.0}{-}{06}$ \\
    
    $1023$  &$\scinote{4.7}{-}{08}$&  $\scinote{9.6}{-}{07}$& $\scinote{2.7}{-}{06}$  & $\scinote{5.5}{-}{06}$ &$\scinote{1.7}{-}{09}$&$\scinote{2.6}{-}{08}$ & $\scinote{1.0}{-}{07}$&$\scinote{4.3}{-}{07}$  \\
    
    $2047$  &$\scinote{1.1}{-}{08}$& $\scinote{2.6}{-}{07}$ & $\scinote{7.7}{-}{07}$  & $\scinote{1.6}{-}{06}$&$\scinote{2.4}{-}{10}$&$\scinote{4.0}{-}{09}$ &$\scinote{1.8}{-}{08}$ & $\scinote{9.0}{-}{08}$ \\
    
    $4095$  &$\scinote{2.4}{-}{09}$&$\scinote{7.0}{-}{08}$ & $\scinote{2.2}{-}{07}$  & $\scinote{5.0}{-}{07}$ &$\scinote{3.3}{-}{11}$&$\scinote{6.3}{-}{10}$ & $\scinote{3.2}{-}{09}$&$\scinote{1.9}{-}{08}$
    \\ \midrule
    Rate:& -2.1& -1.9&-1.8&-1.8&-2.8&-2.7&-2.5&-2.2\\
    \bottomrule
  \end{tabular}
\end{table}

\begin{figure}
\centering
\includegraphics[scale=0.35]{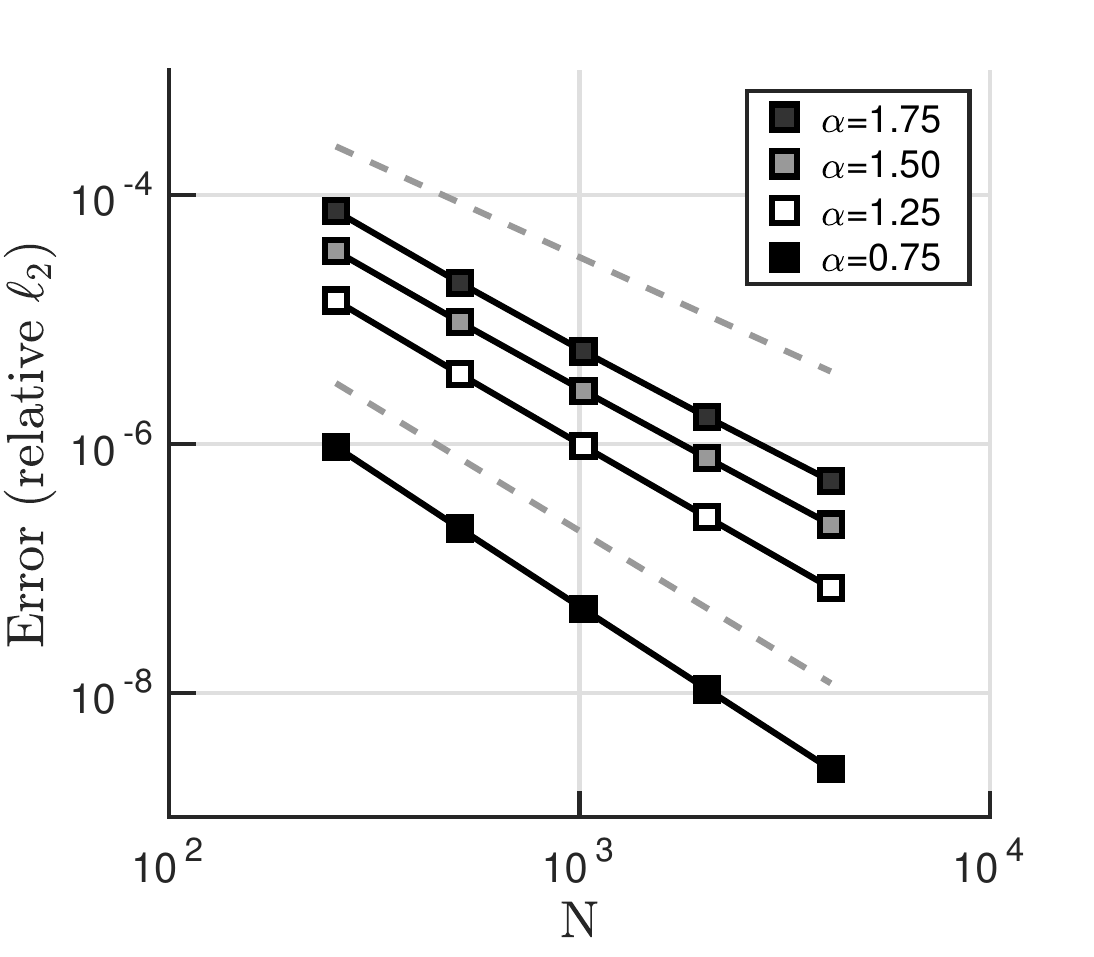}\includegraphics[scale=0.35]{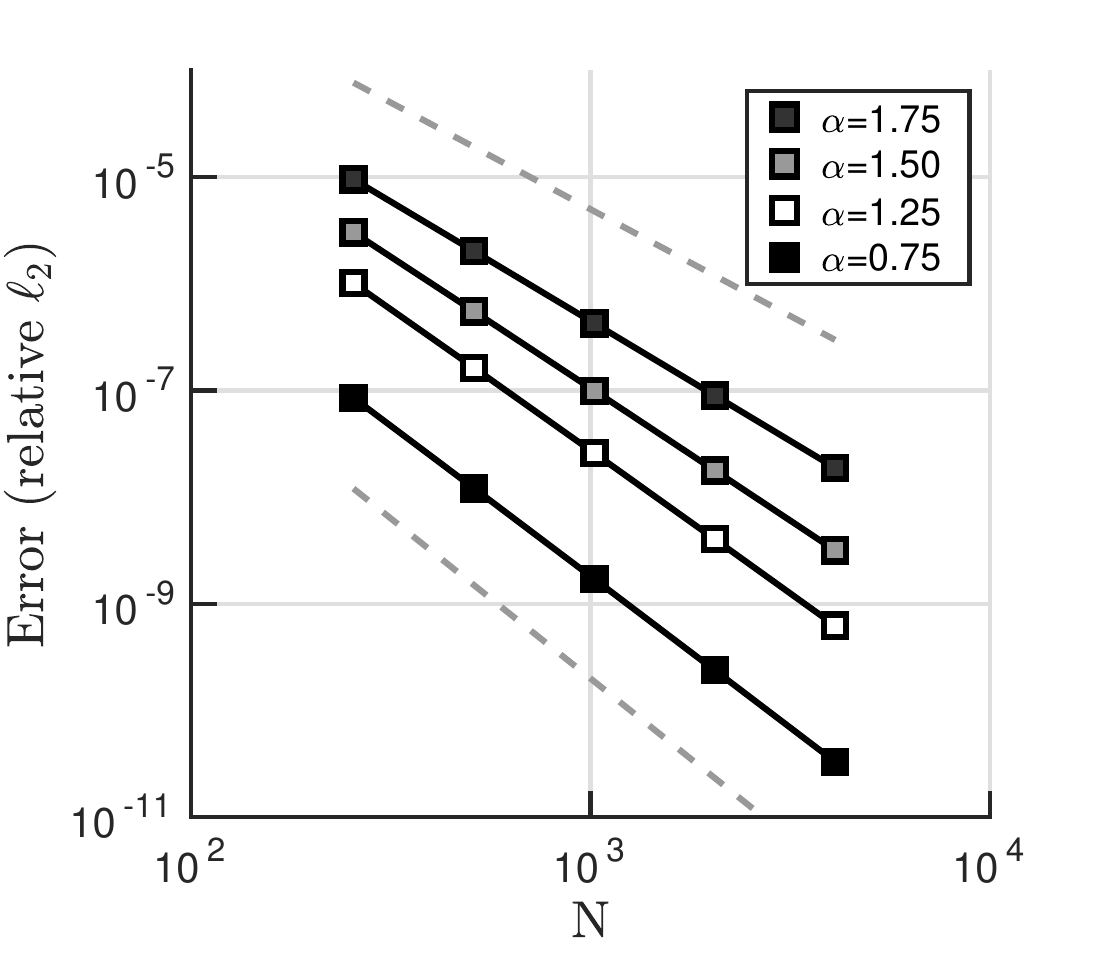}
\caption{\label{fig:1derrorplots} For the one-dimensional example, we plot the relative $\ell_2$
  apply error $e_\text{app}$ (left) and solve error $e_\text{sol}$ (right) as tabulated in
  \cref{tab:1D_con}.  In each case we see steady convergence, though with differing rates (note the
  difference in $y$-axis scale between figures).  On the left, the top trend line is $O(N^{-1.5})$ and the bottom is $O(N^{-2})$.  On the right, the top trend line is $O(N^{-2})$ and the bottom is $O(N^{-3})$.}
\end{figure}

\begin{table}
  \small
  \ra{1.0}
  \centering
  
  \caption{Runtime $t_\text{CG}$ and number of iterations $n_\text{CG}$ required to solve the
    one-dimensional elliptic example using CG with/without preconditioning based on the
    finite-difference Laplacian.  The parenthesized quantities indicate the corresponding test did
    not converge within 1000 iterations. We omit results for $\alpha=0.75$, as for $\alpha<1$ our preconditioning scheme is unnecessary.\label{tab:1D_cg}}
  \begin{tabular}{@{}cccccc@{}}\toprule
    \multicolumn{2}{c}{}&\multicolumn{2}{c}{$\epsilon_\text{res}=10^{-6}$} & \multicolumn{2}{c}{$\epsilon_\text{res}=10^{-9}$}\\
     \cmidrule(lr){3-4} \cmidrule(lr){5-6} 
     $\alpha$ & $N$ & \multicolumn{1}{c}{$t_\text{CG}$} & \multicolumn{1}{c}{$n_\text{CG}$}  &  \multicolumn{1}{c}{$t_\text{CG}$} & \multicolumn{1}{c}{$n_\text{CG}$}  \\
     \midrule
    \multirow{5}{*}{$1.25$}
    &$511$   & $\scinote{4.8}{-}{3} \,/\, \scinote{1.8}{-}{2}$ & $22 \,/\, 104$ 
    & $\scinote{7.2}{-}{3} \,/\, \scinote{2.0}{-}{2}$&  $35 \,/\, 123$     \\

    &$1023$  & $\scinote{1.4}{-}{2} \,/\, \scinote{5.7}{-}{2}$ & $27 \,/\, 162$ 
    & $\scinote{1.5}{-}{2} \,/\, \scinote{5.0}{-}{2}$ &  $44 \,/\, 190$  \\
    
    &$2047$  &$\scinote{2.6}{-}{2} \,/\, \scinote{1.5}{-}{1}$ & $35 \,/\, 251$
    & $\scinote{3.9}{-}{2} \,/\, \scinote{1.8}{-}{1}$ &  $57 \,/\, 295$ \\
    
    &$4095$  & $\scinote{6.3}{-}{2} \,/\, \scinote{4.3}{-}{1}$ & $43 \,/\, 389$
    & $\scinote{9.5}{-}{2} \,/\, \scinote{5.1}{-}{1}$ &  $72 \,/\, 457$ 
    \\\cmidrule(lr){2-6}
    &Rate:& $1.2\,/\,1.5$& * &$1.3\,/\,1.6$&*
    \\
  \midrule[\heavyrulewidth]

      \multirow{5}{*}{$1.50$}
    &$511$  &  $\scinote{3.6}{-}{3} \,/\, \scinote{2.5}{-}{2}$ & $15 \,/\, 156$
    & $\scinote{6.4}{-}{3} \,/\, \scinote{2.9}{-}{2}$ &  $23 \,/\, 174$ \\
    
    &$1023$  & $\scinote{8.1}{-}{3} \,/\, \scinote{6.9}{-}{2}$ & $20 \,/\, 263$
    & $\scinote{1.2}{-}{2} \,/\, \scinote{8.8}{-}{2}$ &  $28 \,/\, 294$ 
    \\
    &$2047$ & $\scinote{1.8}{-}{2} \,/\, \scinote{2.7}{-}{1}$ & $22 \,/\, 445$
    & $\scinote{2.9}{-}{2} \,/\, \scinote{3.1}{-}{1}$ &  $34 \,/\, 497$
    
    \\
    &$4095$ & $\scinote{3.7}{-}{2} \,/\, \scinote{8.5}{-}{1}$ & $26 \,/\, 752$ 
    & $\scinote{5.7}{-}{2} \,/\, \scinote{9.7}{-}{1}$ &  $40 \,/\, 839$
    \\ \cmidrule(lr){2-6} 
     &Rate:& $1.1\,/\,1.7$& * &$1.1\,/\,1.7$&*
    \\
  \midrule[\heavyrulewidth]
      \multirow{5}{*}{$1.75$}
    &$511$  &  $\scinote{2.7}{-}{3} \,/\, \scinote{3.3}{-}{2}$ & $11 \,/\, 216$
    & $\scinote{3.7}{-}{3} \,/\, \scinote{3.8}{-}{2}$ &  $15 \,/\, 229$\\
    
    &$1023$  & $\scinote{4.5}{-}{3} \,/\, \scinote{9.9}{-}{2}$ & $12 \,/\,397$
    & $\scinote{7.9}{-}{3} \,/\, \scinote{1.1}{-}{1}$ &  $17\,/\, 422$
    \\
    &$2047$ & $\scinote{1.1}{-}{2} \,/\, \scinote{4.3}{-}{1}$ & $13 \,/\, 731$
    & $\scinote{1.7}{-}{2} \,/\, \scinote{4.8}{-}{1}$ &  $18 \,/\, 776$
    
    \\
    &$4095$ & $\scinote{2.7}{-}{2} \,/\, (\scinote{1}{+}{0})$& $15 \,/\, (1000)$
    & $\scinote{4.2}{-}{2} \,/\, (\scinote{1}{+}{0})$ &  $21 \,/\, (1000)$ 
    \\ \cmidrule(lr){2-6}
     &Rate:& $1.1\,/\,1.8$ & * &$1.2\,/\,1.8$&*
    \\
    \bottomrule\\
  \end{tabular}
\end{table}

\begin{table}
\ra{1.0}
  \centering
  \caption{Relative solve
    errors for $\alpha\in\{0.25,0.50,0.75,1.00,1.25,1.50,1.75\}$ for the one-dimensional elliptic example with right-hand side $\v{f}=1$ and discrete solution $\v{u}$ approximating \eqref{eq:lesssmooth}.\label{tab:1D_erronly2}}
  \begin{tabular}{@{}@{\extracolsep{-1.25pt}}cccccccc@{}}\toprule
    \multicolumn{1}{c}{ $N$} &\multicolumn{1}{c}{$e_\text{sol,0.25}$}&\multicolumn{1}{c}{$e_\text{sol,0.50}$}& \multicolumn{1}{c}{$e_\text{sol,0.75}$}&\multicolumn{1}{c}{$e_\text{sol,1.00}$}&\multicolumn{1}{c}{$e_\text{sol,1.25}$}
    &\multicolumn{1}{c}{$e_\text{sol,1.50}$} & \multicolumn{1}{c}{$e_\text{sol,1.75}$}\\ \midrule
    
    $511$ &$\scinote{3.2}{-}{03}$ & $\scinote{3.5}{-}{03}$&$\scinote{3.0}{-}{03}$&$\scinote{2.5}{-}{03}$& $\scinote{2.0}{-}{03}$  & $\scinote{1.4}{-}{03}$  & $\scinote{8.1}{-}{04}$  \\
    
    $1023$ &$\scinote{2.1}{-}{03}$ &$\scinote{2.1}{-}{03}$&$\scinote{1.7}{-}{03}$&$\scinote{1.3}{-}{03}$&  $\scinote{1.0}{-}{03}$& $\scinote{7.2}{-}{04}$  & $\scinote{4.1}{-}{04}$   \\
    
    $2047$  &$\scinote{1.4}{-}{03}$&$\scinote{1.2}{-}{03}$&$\scinote{9.2}{-}{04}$&$\scinote{6.8}{-}{04}$& $\scinote{5.0}{-}{04}$ & $\scinote{3.6}{-}{04}$  & $\scinote{2.0}{-}{04}$ \\
    
    $4095$  &$\scinote{8.8}{-}{04}$&$\scinote{7.4}{-}{04}$&$\scinote{5.0}{-}{04}$&$\scinote{3.5}{-}{04}$&$\scinote{2.6}{-}{04}$ & $\scinote{1.8}{-}{04}$  & $\scinote{1.0}{-}{04}$ 
    \\ \midrule
    Rate:&-0.63&-0.75&-0.86&-0.93&-0.98&-0.99&-1.00\\
    \bottomrule
  \end{tabular}
  \end{table}

\begin{figure}
\centering \includegraphics[scale=0.35]{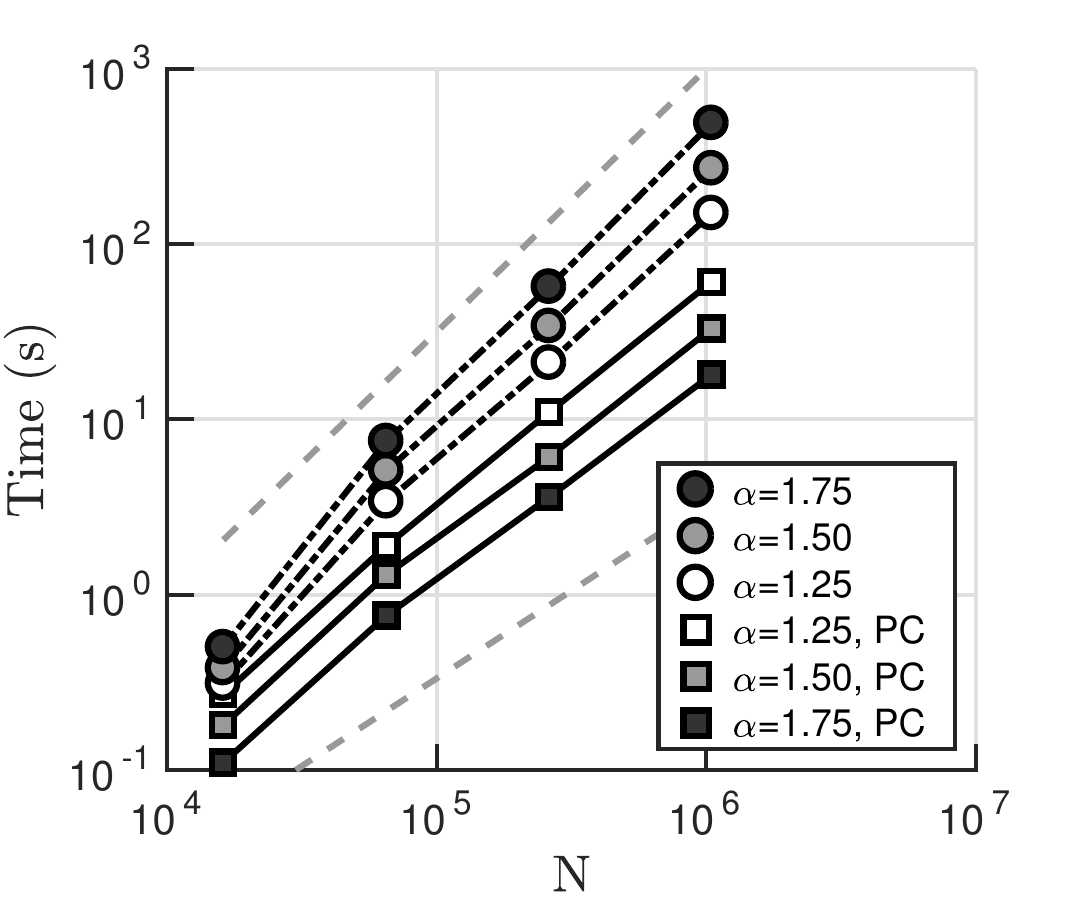}\includegraphics[scale=0.35]{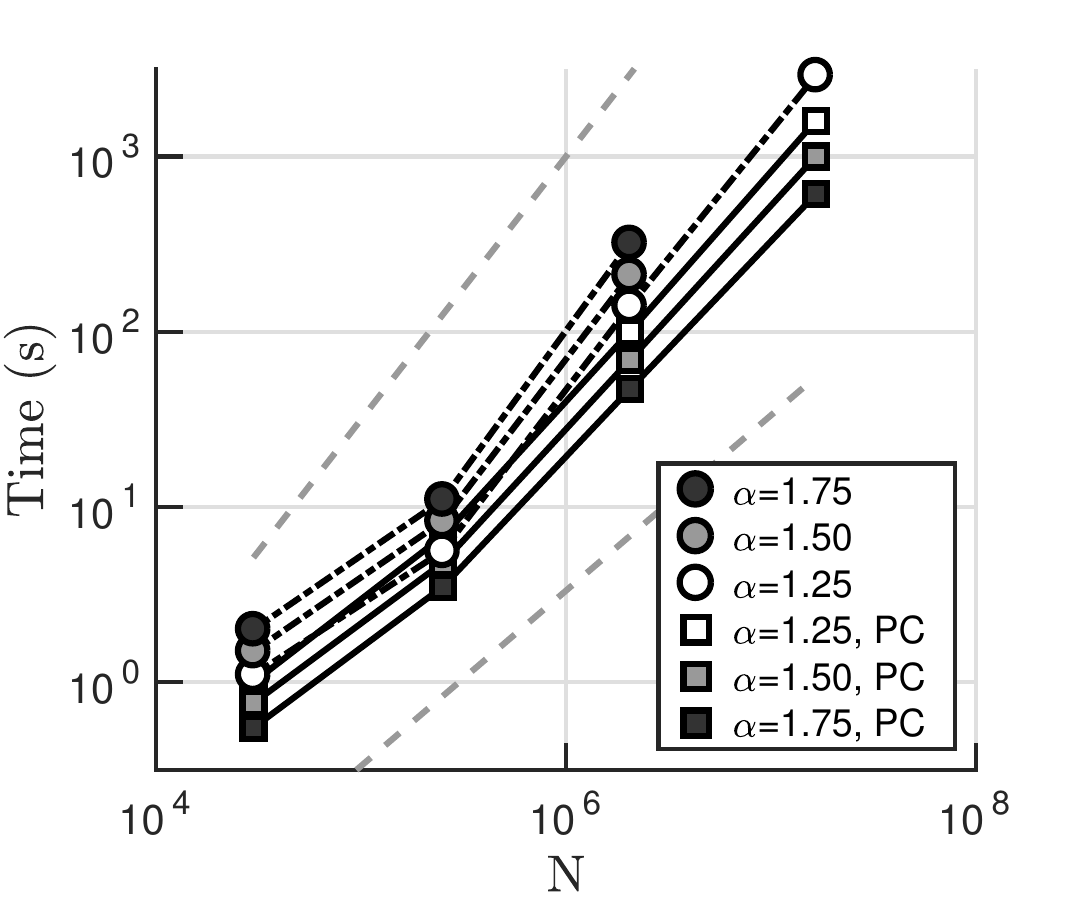}
\caption{\label{fig:2d3d} For the two-dimensional (left) and three-dimensional (right) examples, we
  plot the runtime $t_\text{CG}$ required for CG to attain an accuracy of $\epsilon_\text{res}=10^{-9}$ as tabulated in \cref{tab:2dcg,tab:3D_its}, both
  with (square markers) and without (circular markers) preconditioning.  Note that some points in the right plot are absent due to
  excessive runtime.  In both plots the top trend line is $O(N^{1.5})$ and the bottom is $O(N)$.}
\end{figure}

\subsection{Elliptic example in two dimensions: square domain}\label{sec:elliptic2d}

For a two-dimensional example, we use a square domain $\Omega~=~[0,1]^2$ discretized with a regular grid of $N$ DOFs, showing the time to construct ($t_\text{con}$) and apply ($t_\text{app}$) the discrete operator $\m{M}$  in \cref{tab:2dcon}.  For these and the remainder of our examples, we focus on the case $\alpha>1$ for brevity, as for $\alpha<1$ the linear system \eqref{eq:linearelliptic} may be solved efficiently without any preconditioning.

Unlike the
one-dimensional case, in two dimensions there is no RHS $f$ for which \eqref{eq:fracelliptic} has a
simple known solution.  Instead, we use
standard grid error estimates based on Richardson extrapolation to estimate the rate of convergence. Concretely, for our application error grid estimate we use the function
\begin{align}\label{eq:test}
g_1(\v{x}) &= \prod_{i=1}^2 \frac{1}{4}(1+\cos(2\pi x_i - \pi))^2,
\end{align}
which is nice when truncated to $\Omega=[0,1]^2$.  Using a coarse grid of size $255^2$, a
medium-scale grid of size $511^2$, and a fine-scale grid of size $1023^2$, we obtained three
corresponding estimates of the fractional Laplacian of $g_1$ evaluated on the common coarse grid:
$\{\v{f}_\text{c}, \v{f}_\text{m}, \v{f}_\text{f}\}\subset\R^{255^2}$.  The Richardson error rate
estimate is then given by
\begin{align}\label{eq:rich}
R_p  \equiv \frac{\log\|\v{f}_\text{f} - \v{f}_\text{m} \|_p - \log \|\v{f}_\text{m} - \v{f}_\text{c} \|_p}{\log 1/2},
\end{align}
where $\|\cdot\|_p$ is the $\ell_p$ norm.  For solution error, we obtain analogous error rate
estimates for the solution $u$ to the extended Dirichlet problem using RHS $g_2(\v{x})=1$.  These
rate estimates can be seen in \cref{tab:2drich}, where we observe that the solution error rates are
empirically limited to first-order due again to the general lack of smoothness of $u$ near the boundary $\partial\Omega$
\cite{bdreg}.

In \cref{tab:2dcg} and \cref{fig:2d3d} (left) we give CG convergence results for the square
example, analogous to the one-dimensional results in \cref{tab:1D_cg}.  Note that unlike the
one-dimensional case, here it is clearly advantageous to use a fast Poisson solver to apply the preconditioner.
While the reduced number of iterations is roughly offset by the cost of applying the preconditioner
at each iteration for smaller $\alpha$ and $N$, the utility of our preconditioning approach becomes
clear for the larger, more ill-conditioned problems.

\begin{table}
  \ra{1.0}
  \centering
  
  \begin{minipage}[c]{.4\linewidth}
    \centering
    \small
    \caption{Runtimes $t_\text{con}$ for the construction of the operator $\m{M}$ and $t_\text{app}$
      for application via FFT for the two-dimensional elliptic example.\label{tab:2dcon}}
    \begin{tabular}{@{}@{\extracolsep{-1pt}}ccc@{}}\toprule
      \multicolumn{1}{c}{ $N$} & \multicolumn{1}{c}{$t_\text{con}$}
      &\multicolumn{1}{c}{$t_\text{app}$} \\ \midrule
      
      $127^2$   & $\scinote{8.4}{-}{2}$ & $\scinote{4.5}{-}{3}$     \\
      
      $255^2$  & $\scinote{9.8}{-}{2}$ & $\scinote{3.0}{-}{2}$\\
      
      $511^2$  & $\scinote{1.9}{-}{1}$ & $\scinote{1.1}{-}{1}$  \\
      
      $1023^2$  & $\scinote{6.1}{-}{1}$ & $\scinote{5.0}{-}{1}$    \\\midrule
      Rate:&0.5 &1.1\\
    \bottomrule
  \end{tabular}
  
  \end{minipage}
  \begin{minipage}[c]{.4\linewidth}
    \small
    \centering
    \caption{Grid error estimates $R_2$ and $R_\infty$ for both $\v{u}$ and $\v{f}$ in
      \eqref{eq:matelliptic} for the two-dimensional elliptic example. \label{tab:2drich}}
    \begin{tabular}{@{}@{\extracolsep{-1pt}}lcccc@{}}\toprule
      \multicolumn{1}{c}{}&\multicolumn{2}{c}{Grid rate, $\v{u}$} & \multicolumn{2}{c}{Grid rate, $\v{f}$}\\
      \cmidrule(lr){2-3} \cmidrule(lr){4-5} 
      \multicolumn{1}{c}{$\alpha$} & $R_2$ &\multicolumn{1}{c}{$R_\infty$} &\multicolumn{1}{c}{$R_2$} &$R_\infty$ \\\midrule
      $1.25$   & $0.98$  & $0.87$& $2.65$    & $2.63$     \\
      
      $1.50$  & $0.99$ & $0.82$ & $2.38$    & $2.35$ \\
      
      $1.75$  & $0.99$  & $0.87$& $2.16$    & $2.15$\\
      \bottomrule
    \end{tabular}
  \end{minipage}
\end{table}

\begin{table}
  \small
  \ra{1.0}
  \centering
  
  \caption{Runtime $t_\text{CG}$ and number of iterations $n_\text{CG}$ required to solve the
    two-dimensional elliptic example using CG with/without preconditioning.  \label{tab:2dcg}}
  \begin{tabular}{@{}cccccc@{}}\toprule
    \multicolumn{2}{c}{}&\multicolumn{2}{c}{$\epsilon_\text{res}=10^{-6}$} & \multicolumn{2}{c}{$\epsilon_\text{res}=10^{-9}$}\\
    \cmidrule(lr){3-4} \cmidrule(lr){5-6} 
    $\alpha$ & $N$ & \multicolumn{1}{c}{$t_\text{CG}$} & \multicolumn{1}{c}{$n_\text{CG}$}  &  \multicolumn{1}{c}{$t_\text{CG}$} & \multicolumn{1}{c}{$n_\text{CG}$}\\  
    \midrule
    \multirow{5}{*}{$1.25$}
    &$127^2$   & $\scinote{1.7}{-}{1} \,/\, \scinote{2.2}{-}{1}$ & $21 \,/\, 61$ 
    & $\scinote{2.7}{-}{1} \,/\, \scinote{3.1}{-}{1}$&  $33 \,/\, 77$ 
    \\
    &$255^2$  & $\scinote{1.1}{+}{0} \,/\, \scinote{2.3}{+}{0}$ & $28 \,/\, 95$ 
    & $\scinote{1.9}{+}{0} \,/\, \scinote{3.4}{+}{0}$ &  $43 \,/\, 121$  \\
    
    &$511^2$  &$\scinote{6.5}{+}{0} \,/\, \scinote{1.6}{+}{1}$ & $36 \,/\, 148$
    & $\scinote{1.1}{+}{1} \,/\, \scinote{2.1}{+}{1}$ &  $57 \,/\, 188$ \\
    
    &$1023^2$ &$\scinote{3.7}{+}{1} \,/\, \scinote{1.2}{+}{2}$ & $47 \,/\, 231$
    & $\scinote{6.1}{+}{1} \,/\, \scinote{1.5}{+}{2}$ & $74 \,/\, 292$
    \\\cmidrule(lr){2-6}
&Rate:&$1.3\,/\,1.5$&*&$1.3\,/\,1.5$&*
    \\\midrule[\heavyrulewidth]
        \multirow{5}{*}{$1.50$}
    &$127^2$  &  $\scinote{1.3}{-}{1} \,/\, \scinote{3.3}{-}{1}$ & $15 \,/\, 86$
    & $\scinote{1.8}{-}{1} \,/\, \scinote{3.8}{-}{1}$ &  $23 \,/\, 108$ \\
    
    &$255^2$  & $\scinote{8.6}{-}{1} \,/\, \scinote{4.0}{+}{0}$ & $18 \,/\, 147$
    & $\scinote{1.3}{+}{0} \,/\, \scinote{5.1}{+}{0}$ &  $28 \,/\, 184$ 
    \\
    &$511^2$ & $\scinote{4.0}{+}{0} \,/\, \scinote{2.7}{+}{1}$ & $21 \,/\, 250$
    & $\scinote{6.1}{+}{0} \,/\, \scinote{3.4}{+}{1}$ &  $33 \,/\, 312$
    \\
    &$1023^2$ & $\scinote{2.2}{+}{1} \,/\, \scinote{2.2}{+}{2}$ & $26 \,/\, 425$
    &  $\scinote{3.3}{+}{1} \,/\, \scinote{2.7}{+}{2}$ & $40 \,/\, 528$
     \\\cmidrule(lr){2-6}
&Rate:&$1.2\,/\,1.5$&*&$1.2\,/\,1.6$&*
    \\\midrule[\heavyrulewidth]    
    \multirow{5}{*}{$1.75$}
    &$127^2$  &  $\scinote{8.6}{-}{2} \,/\, \scinote{4.1}{-}{1}$ & $11 \,/\, 120$
    & $\scinote{1.1}{-}{1} \,/\, \scinote{5.0}{-}{1}$ &  $15 \,/\, 149$\\
    
    &$255^2$  & $\scinote{5.7}{-}{1} \,/\, \scinote{6.1}{+}{0}$ & $12 \,/\,223$
    & $\scinote{7.6}{-}{1} \,/\, \scinote{7.5}{+}{0}$ &  $17\,/\, 278$
    \\
    &$511^2$ & $\scinote{2.5}{+}{0} \,/\, \scinote{4.6}{+}{1}$ & $13 \,/\, 413$
    & $\scinote{3.6}{+}{0} \,/\, \scinote{5.7}{+}{1}$ &  $19 \,/\, 516$
    \\
    &$1023^2$ &$\scinote{1.2}{+}{1} \,/\, \scinote{4.0}{+}{2}$ & $14 \,/\, 766$ 
    &$\scinote{1.8}{+}{1} \,/\, \scinote{4.9}{+}{2}$ & $21 \,/\, 962$ 
     \\\cmidrule(lr){2-6}
&Rate:&$1.2\,/\,1.6$&*&$1.2\,/\,1.6$&*
    \\
    \bottomrule\\
  \end{tabular}
\end{table}

As discussed, the Richardson convergence results for non-smooth $\v{u}$ in \cref{tab:2drich} are limited to roughly first-order accuracy.  To validate the accuracy of our approach on smooth solutions, we generate a smooth synthetic example in 2D as follows. Taking $u \equiv g_1$ in \eqref{eq:test}, we sample $u$ on a regular grid of $4095^2$ DOFs to obtain $\v{u}^{(4095)}_{\text{true}}$.  With $\m{M}^{(4095)}$ as our discrete operator of the corresponding size, we compute $\v{f}^{(4095)} = \m{M}^{(4095)}\v{u}^{(4095)}_\text{true}$.  For a given problem size $N=n^2$ we obtain the RHS vector $\v{f}^{(n)}$ and ``true solution'' $\v{u}^{(n)}_{\text{true}}$ by appropriately subsampling $\v{f}^{(4095)}$ and $\v{u}^{(4095)}_{\text{true}}$, respectively, which then permits computing the relative $\ell_2$ error norm $e = \|\v{u}^{(n)}_\text{true} - \v{u}^{(n)}\|/\|\v{u}^{(n)}_\text{true}\|$,
where the discrete solution satisfies the problem on the smaller grid $\m{M}^{(n)}\v{u}^{(n)} = \v{f}^{(n)}$.  Results can be seen in \cref{tab:2D_smooth}, where we note that rates given are in terms of $n=\sqrt{N}$ as appropriate for error rates on a regular 2D grid.  For this problem with smooth solution, we see better error rates compared to \cref{tab:2drich}, aligned with our theory.
\begin{table}
  \small
  \ra{1.0}
  \centering
  \caption{Relative $\ell_2$ solve
    error for $\alpha\in\{0.75,1.25,1.50,1.75\}$ for the two-dimensional elliptic example with right-hand side generated numerically by sampling as described in text.  The last row gives an estimate of the rate of growth as $n=\sqrt{N} \sim h^{-1}$ is increased, i.e., $\gamma$ in $\O(n^\gamma)$.\label{tab:2D_smooth}}
  \begin{tabular}{@{}@{\extracolsep{-1.25pt}}ccccc@{}}\toprule
    \multicolumn{1}{c}{ $N$} & \multicolumn{1}{c}{$e_\text{0.75}$}&\multicolumn{1}{c}{$e_\text{1.25}$}
    &\multicolumn{1}{c}{$e_\text{1.5}$} & \multicolumn{1}{c}{$e_\text{1.75}$}\\ \midrule
        $127^2$   &$\scinote{2.1}{-}{06}$&$\scinote{9.4}{-}{06}$ & $\scinote{2.7}{-}{05}$&$\scinote{8.2}{-}{05}$ \\
    $255^2$   &$\scinote{1.6}{-}{07}$&$\scinote{1.8}{-}{06}$ & $\scinote{6.0}{-}{06}$&$\scinote{2.1}{-}{05}$ \\

    $511^2$   &$\scinote{1.5}{-}{08}$& $\scinote{2.9}{-}{07}$& $\scinote{1.2}{-}{06}$&$\scinote{4.8}{-}{06}$ \\
    
    $1023^2$  &$\scinote{1.6}{-}{09}$&$\scinote{4.3}{-}{08}$ & $\scinote{2.1}{-}{07}$&$\scinote{9.9}{-}{07}$ 
     
    \\ \midrule
    Rate:& -3.4&-2.6&-2.3&-2.1\\
    \bottomrule
  \end{tabular}
\end{table}

\subsection{Elliptic example in three dimensions}
In three dimensions for the hypercube case $\Omega=[0,1]^3$ we repeat experiments analogous to those in \cref{sec:elliptic2d}.

To compute our three-dimensional grid error estimates \eqref{eq:rich}, we use the 3D analogue of
\eqref{eq:test},
\begin{align*}
g_1(\v{x}) &= \prod_{i=1}^3 \frac{1}{4}(1+\cos(2\pi x_i - \pi))^2
\end{align*}
for the apply error and again $g_2(\v{x})\equiv 1$ for the solution error.  We use coarse,
medium-scale, and fine grids with sizes $63^3, 127^3,$ and $255^3$, respectively, and give the
results in \cref{tab:3Drich}.  We remark that the error rates reported for $\v{f}$ appear
artifically inflated, likely due to the fact that the grid error estimate is an asymptotic
approximation that holds in the limit of large $N$, and $N=63^3$ is not large.

In \cref{tab:3D_its} and \cref{fig:2d3d} (right) we give CG convergence results for the
three-dimensional example, just like those for the two-dimensional example.  Just as in two
dimensions, the utility of our simple preconditioner is clear for larger problems and for larger
$\alpha$, where iterative approaches to solving the linear system start to become prohibitively
expensive without preconditioning.

\begin{table}
  \ra{1.0}
  \centering
  
  \begin{minipage}[c]{.4\linewidth}
    \centering
    \small
    \caption{Runtimes $t_\text{con}$ for the construction of the operator $\m{M}$ and $t_\text{app}$
      for application via FFT for the three-dimensional elliptic example.\label{tab:3dcon}}
    \begin{tabular}{@{}@{\extracolsep{-1pt}}ccc@{}}\toprule
      \multicolumn{1}{c}{
        $N$} & \multicolumn{1}{c}{$t_\text{con}$} &\multicolumn{1}{c}{$t_\text{app}$} \\ \midrule
      
      $31^3$   & $\scinote{2.3}{-}{1}$ & $\scinote{3.0}{-}{2}$     \\
      
      $63^3$  & $\scinote{3.1}{-}{1}$ & $\scinote{8.7}{-}{2}$\\
      
      $127^3$  & $\scinote{1.3}{+}{0}$ & $\scinote{1.3}{+}{0}$  \\
      
      $255^3$  & $\scinote{1.8}{+}{1}$ & $\scinote{2.0}{+}{1}$\\\midrule
      Rate: & 0.7 & 1.1
      \\
      \bottomrule
    \end{tabular}
    
  \end{minipage}
  \begin{minipage}[c]{.4\linewidth}
    \small
    \centering
    \caption{Grid error estimates $R_2$ and $R_\infty$ for both $\v{u}$ and $\v{f}$ in
      \eqref{eq:matelliptic} for the three-dimensional elliptic example. \label{tab:3Drich}}
    \begin{tabular}{@{}@{\extracolsep{-1pt}}lcccc@{}}\toprule
      \multicolumn{1}{c}{}&\multicolumn{2}{c}{Grid rate, $\v{u}$} & \multicolumn{2}{c}{Grid rate, $\v{f}$}\\
      \cmidrule(lr){2-3} \cmidrule(lr){4-5} 
      \multicolumn{1}{c}{$\alpha$} & $R_2$ &\multicolumn{1}{c}{$R_\infty$} &\multicolumn{1}{c}{$R_2$} &$R_\infty$ \\\midrule
      $1.25$   & $1.02$  & $0.90$& $4.35$
      & $4.55$       \\
      $1.50$  & $1.10$ & $0.87$ & $4.22$
      & $4.62$ \\
      $1.75$  & $1.30$  & $1.13$& $3.90$
      & $4.99$\\
      \bottomrule
    \end{tabular}
  \end{minipage}
\end{table}

\begin{table}
  \small
  \ra{1.0}
  \centering
  \caption{Runtime $t_\text{CG}$ and number of iterations $n_\text{CG}$ required to solve the
    three-dimensional elliptic example using CG with/without preconditioning.  The parenthesized quantities indicate the corresponding test did
    not converge within 250 iterations. For the rate computations we omit $N=31^3$ due to the clear non-asymptotic behavior in \cref{fig:2d3d}.\label{tab:3D_its}}
  \begin{tabular}{@{}cccccc@{}}\toprule
    \multicolumn{2}{c}{}&\multicolumn{2}{c}{$\epsilon_\text{res}=10^{-6}$} & \multicolumn{2}{c}{$\epsilon_\text{res}=10^{-9}$}\\
    \cmidrule(lr){3-4} \cmidrule(lr){5-6} 
    $\alpha$ & $N$ & \multicolumn{1}{c}{$t_\text{CG}$} & \multicolumn{1}{c}{$n_\text{CG}$}  &  \multicolumn{1}{c}{$t_\text{CG}$} & \multicolumn{1}{c}{$n_\text{CG}$}\\  
    \midrule
    \multirow{5}{*}{$1.25$}
    &$31^3$   & $\scinote{6.4}{-}{1} \,/\, \scinote{7.7}{-}{1}$ & $ 13 \,/\, 27$ 
    & $\scinote{1.0}{+}{0} \,/\, \scinote{1.1}{+}{0}$&  $20 \,/\, 37$ 
    \\
    &$63^3$  & $\scinote{4.3}{+}{0} \,/\, \scinote{4.6}{+}{0}$ & $16 \,/\, 47$ 
    & $\scinote{6.8}{+}{0} \,/\, \scinote{5.6}{+}{0}$ &  $24 \,/\, 63$  \\
    
    &$127^3$  &$\scinote{6.9}{+}{1} \,/\, \scinote{1.0}{+}{2}$ & $21 \,/\, 75$
    & $\scinote{1.0}{+}{2} \,/\, \scinote{1.4}{+}{2}$ &  $32 \,/\, 100$ \\
    
    &{$255^3$} &$\scinote{1.0}{+}{3} \,/\, \scinote{2.2}{+}{3}$ & $27 \,/\, 117 $
    & $\scinote{1.6}{+}{3} \,/\, \scinote{2.9}{+}{3}$ & $42 \,/\, 156$
     \\\cmidrule(lr){2-6}
&Rate:&$1.3\,/\,1.5$&*&$1.3\,/\,1.5$&*
    \\\midrule[\heavyrulewidth]    
        \multirow{5}{*}{$1.50$}
    &$31^3$  &  $\scinote{5.3}{-}{1} \,/\, \scinote{1.1}{+}{0}$ & $10 \,/\, 36$
    & $\scinote{7.6}{-}{1} \,/\, \scinote{1.5}{+}{0}$ &  $14 \,/\, 50$ \\
    
    &$63^3$  & $\scinote{3.5}{+}{0} \,/\, \scinote{5.9}{+}{0}$ & $12 \,/\, 66$
    & $\scinote{4.8}{+}{0} \,/\, \scinote{8.3}{+}{0}$ &  $17 \,/\, 91$ 
    \\
    &$127^3$ & $\scinote{4.7}{+}{1} \,/\, \scinote{1.5}{+}{2}$ & $14 \,/\, 113$
    & $\scinote{6.9}{+}{1} \,/\, \scinote{2.1}{+}{2}$ &  $21 \,/\, 154$
    \\
    &{$255^3$} & $\scinote{7.2}{+}{2} \,/\, \scinote{3.6}{+}{3}$ & $18 \,/\, 195 $
    &  $\scinote{1.0}{+}{3} \,/\, (\scinote{4.6}{+}{3})$ & $26 \,/\, (250) $
    \\\cmidrule(lr){2-6}
&Rate:&$1.3\,/\,1.5$&*&$1.3\,/\,1.5$&*
    \\\midrule[\heavyrulewidth]    
    \multirow{5}{*}{$1.75$}
    &$31^3$  &  $\scinote{4.1}{-}{1} \,/\, \scinote{1.6}{+}{0}$ & $7 \,/\, 49$
    & $\scinote{5.5}{-}{1} \,/\, \scinote{2.0}{+}{0}$ &  $10 \,/\, 65$\\
    
    &$63^3$  & $\scinote{2.4}{+}{0} \,/\, \scinote{8.3}{+}{0}$ & $8 \,/\,93$
    & $\scinote{3.5}{+}{0} \,/\, \scinote{1.1}{+}{1}$ &  $12\,/\, 123$
    \\
    &$127^3$ & $\scinote{3.4}{+}{1} \,/\, \scinote{2.4}{+}{2}$ & $10 \,/\, 175$
    & $\scinote{4.7}{+}{1} \,/\, \scinote{3.2}{+}{2}$ &  $14 \,/\, 229$
    
    \\
    &{$255^3$} &$\scinote{4.6}{+}{2} \,/\, (\scinote{4.6}{+}{3})$ & $11 \,/\, (250) $ 
    &$\scinote{6.1}{+}{2} \,/\, (\scinote{4.6}{+}{3})$ & $15 \,/\ (250)$ 
     \\\cmidrule(lr){2-6}
&Rate:&$ 1.3\,/\,1.6$&*&$ 1.2\,/\,1.6$&*
    \\    
    \bottomrule\\
  \end{tabular}
\end{table}

\subsection{Elliptic examples in two dimensions: an ``L''-shaped domain and octagonal domain}

Before moving to time-dependent examples, we include a final demonstration showing our method applied to problems where the domain is an occluded Cartesian grid (i.e., a regular discretization that is not a hypercube).  The first such example is a problem on an ``L''-shaped domain obtained by taking a regular grid
of $(n-1)^2$ points as before and then removing $(n/2)^2$ contiguous points corresponding to a single corner of the domain, see \cref{fig:ell} (left and center).

\begin{figure}
\centering \includegraphics[scale=0.5]{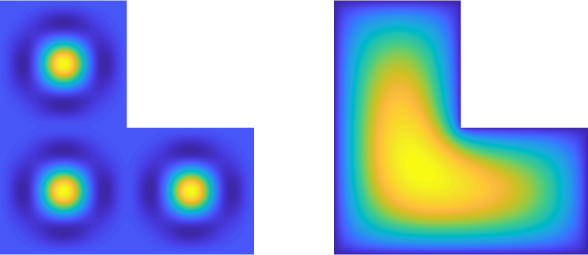} \hspace{0.8cm} \includegraphics[scale=0.095]{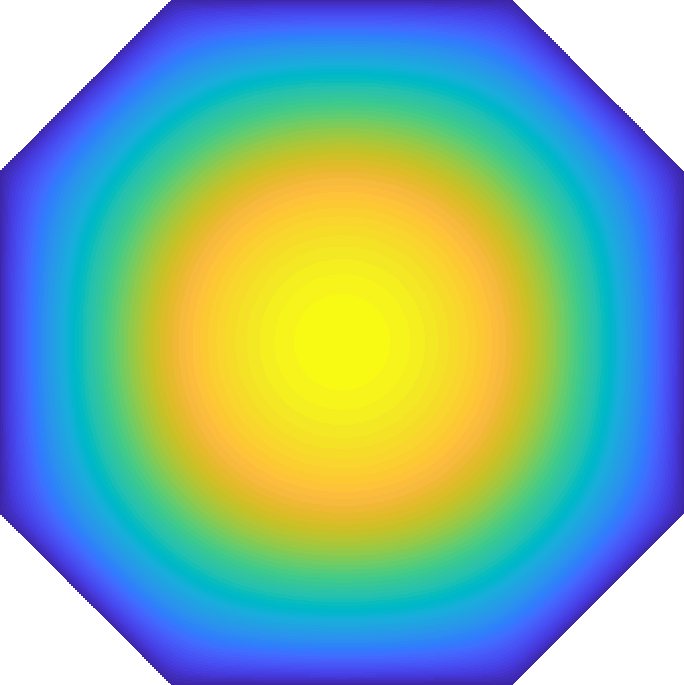}
\caption{\label{fig:ell} For a qualitative demonstration on an ``L''-shaped domain, we plot the right-hand side (left) and solution with $\alpha=1.75$ (center).  As in our other examples, the solution is forced to zero outside of the domain.  For results on an irregular octagonal domain (right), we build an octagon with horizontal and vertical sides of length 2 and all other sides of length $\sqrt{2}$. }
\end{figure}

Because the ``L''-shaped domain is discretized as a subset of a regular grid, the operator $\m{M}$ can still be applied quickly via FFT as before.  However, use of a fast Poisson solver to apply the preconditioner is no longer possible.  Instead, we use a sparse Cholesky factorization as in our one-dimensional examples.  We use the default MATLAB permutation options for sparse Cholesky (which corresponds to an approximate minimum degree ordering of the unknowns) though other methods are possible.

In \cref{tab:L2} we show results for the ``L''-shaped domain for choices of $N$ ranging from $N=12033$ (i.e., $127^2 - 64^2$) to $N=784385$ (i.e., $1023^2-512^2$).  We focus on the time $t_\text{PC}$ to construct the factored preconditioner using sparse Cholesky as well as the runtime and number of iterations for CG both with and without our preconditioning scheme, as before.  For brevity we give only results for $\alpha=1.75$, as results for smaller $\alpha$ follow the same trends as in the case of a square domain.  We remark that, while the time to factor the preconditioner is nonzero, it is still small compared to the time to solve the systems with CG, and the runtimes $t_\text{CG}$ in \cref{tab:L2} are comparable to those in \cref{tab:2dcg} (albeit one must adjust for the slighly different system sizes).

\begin{table}
  \small
  \ra{1.0}
  \centering
  \caption{Runtime $t_\text{PC}$ for construction of the preconditioner for the two-dimensional elliptic example on the ``L''-shaped domain, as well as time $t_\text{CG}$ and number of iterations $n_{CG}$ required for CG to converge to tolerances $10^{-6}$ and $10^{-9}$ with/without preconditioning for the case $\alpha=1.75$.  The parenthesized quantities indicate the corresponding test did
    not converge within 1000 iterations.\label{tab:L2}}
  \begin{tabular}{@{}ccccccc@{}}\toprule
    \multicolumn{3}{c}{}&\multicolumn{2}{c}{$\epsilon_\text{res}=10^{-6}$} & \multicolumn{2}{c}{$\epsilon_\text{res}=10^{-9}$}\\
    \cmidrule(lr){4-5} \cmidrule(lr){6-7} 
    $\alpha$ & $N$ & $t_\text{PC}$ &\multicolumn{1}{c}{$t_\text{CG}$} & \multicolumn{1}{c}{$n_\text{CG}$}  &  \multicolumn{1}{c}{$t_\text{CG}$} & \multicolumn{1}{c}{$n_\text{CG}$}\\  
    \midrule
    \multirow{5}{*}{$1.75$}
    &$12033$  & $\scinote{1.9}{-}{2}$ &$\scinote{6.2}{-}{2} \,/\, \scinote{4.9}{-}{1}$ & $10 \,/\, 141$
    & $\scinote{8.0}{-}{2} \,/\, \scinote{6.4}{-}{1}$ &  $14 \,/\, 191$\\
    
    &$48641$  &$\scinote{8.4}{-}{2}$ &$\scinote{4.3}{-}{1} \,/\, \scinote{7.2}{+}{0}$ & $11 \,/\,268$
    & $\scinote{5.8}{-}{1} \,/\, \scinote{9.2}{+}{0}$ &  $16\,/\, 354$
    \\
    &$195585$ &$\scinote{3.9}{-}{1}$ &$\scinote{2.1}{+}{0} \,/\, \scinote{5.4}{+}{1}$ & $13 \,/\, 497$
    & $\scinote{2.7}{+}{0} \,/\, \scinote{7.0}{+}{1}$ &  $18 \,/\, 655$
    
    \\
    &{$784385$}&$\scinote{1.8}{+}{0}$ &$\scinote{1.0}{+}{1} \,/\, \scinote{4.5}{+}{2}$ & $14 \,/\, 925 $ 
    &$\scinote{1.4}{+}{1} \,/\, (\scinote{4.9}{+}{2})$ & $20 \,/\ (1000)$ 
     \\\cmidrule(lr){2-7}
&Rate:&$1.1$ &$ 1.2\,/\,1.6$&*&$ 1.2\,/\,1.7$&*
    \\    
    \bottomrule\\
  \end{tabular}
\end{table}

For another example discretized on a subset of a regular grid, we construct an irregular octagon whose vertices coincide with points on a regular grid, see \cref{fig:ell} (right).  As with the previous example, we use a sparse Cholesky factorization to build the preconditioner and give analogous results for $\alpha=1.75$ and varying $N$ and $\epsilon_\text{res}$ in \cref{tab:octagon2}.

\begin{table}
  \small
  \ra{1.0}
  \centering
  \caption{Runtime $t_\text{PC}$ for construction of the preconditioner for the two-dimensional elliptic example on the octagonal domain, as well as time $t_\text{CG}$ and number of iterations $n_{CG}$ required for CG to converge to tolerances $10^{-6}$ and $10^{-9}$ with/without preconditioning for the case $\alpha=1.75$.}\label{tab:octagon2}
  \begin{tabular}{@{}ccccccc@{}}\toprule
    \multicolumn{3}{c}{}&\multicolumn{2}{c}{$\epsilon_\text{res}=10^{-6}$} & \multicolumn{2}{c}{$\epsilon_\text{res}=10^{-9}$}\\
    \cmidrule(lr){4-5} \cmidrule(lr){6-7} 
    $\alpha$ & $N$ & $t_\text{PC}$ &\multicolumn{1}{c}{$t_\text{CG}$} & \multicolumn{1}{c}{$n_\text{CG}$}  &  \multicolumn{1}{c}{$t_\text{CG}$} & \multicolumn{1}{c}{$n_\text{CG}$}\\  
    \midrule
    \multirow{5}{*}{$1.75$}
    &$14145$  & $\scinote{5.3}{-}{2}$ &$\scinote{9.3}{-}{2} \,/\, \scinote{3.8}{-}{1}$ & $10 \,/\, 119$
    & $\scinote{1.2}{-}{1} \,/\, \scinote{4.9}{-}{1}$ &  $14 \,/\, 145$\\
    
    &$56961$  &$\scinote{1.4}{-}{1}$ &$\scinote{4.5}{-}{1} \,/\, \scinote{5.4}{+}{0}$ & $11 \,/\,220$
    & $\scinote{6.4}{-}{1} \,/\, \scinote{6.9}{+}{0}$ &  $16\,/\, 267$
    \\
    &$228609$ &$\scinote{5.5}{-}{1}$ &$\scinote{2.0}{+}{0} \,/\, \scinote{4.0}{+}{1}$ & $13 \,/\, 407$
    & $\scinote{2.8}{+}{0} \,/\, \scinote{4.9}{+}{1}$ &  $18 \,/\, 493$
    
    \\
    &{$915969$}&$\scinote{2.7}{+}{0}$ &$\scinote{1.1}{+}{1} \,/\, \scinote{3.7}{+}{2}$ & $15 \,/\, 752 $ 
    &$\scinote{1.4}{+}{1} \,/\, \scinote{4.5}{+}{2}$ & $20 \,/\ 910$ 
     \\\cmidrule(lr){2-7}
&Rate:&$0.9$ &$ 1.1 \,/\, 1.6$&*&$ 1.1 \,/\, 1.6$&*
    \\    
    \bottomrule\\
  \end{tabular}
\end{table}

\subsection{Time-dependent example in two and three dimensions}\label{sec:para}
Finally, we turn to the time-dependent case.  As described in \cref{sec:time}, our approach to the time-dependent fractional diffusion problem
\eqref{eq:fracdiff} involves first computing the discrete fractional Laplacian operator as before
and then using a Crank-Nicolson method to time-step the solution.  Here we demonstrate the
efficiency of our preconditioning scheme for the time-dependent problem and give grid error
estimates for $\Omega=[0,1]^d$.

For smooth solutions, the Crank-Nicolson scheme is locally second-order in time and our spatial
discretization is locally second-order in space.  Thus, we choose our temporal step size in $d$
dimensions as $\Delta t = (N^{1/d} + 1)^{-1}$ such that $\Delta t\approx h$ but the number of time
steps required to reach a final time of $T=0.25$ is integral.  However, we remark that, just as in
the elliptic case, we cannot expect better than first-order convergence in general \cite{real}.

For our grid error estimates we take $f\equiv 0$ in \eqref{eq:fracdiff} and initial condition
\begin{align*}
u_0(\v{x}) = \prod_{i=1}^d \frac{1}{4}(1+\cos(2\pi \nu_ix_i - \pi))^2,
\end{align*}
with $\nu_1 = 3$, $\nu_2=11$, and $\nu_3=2$.  Using \eqref{eq:timestepping} to time-step the
solution to final time $T=0.25$, we then compute grid error estimates for simultaneous refinement in
space and time, which are given in \cref{tab:richpara}.  For the two-dimensional case we use spatial
grids with $255^2$, $511^2$, and $1023^2$ points for the coarse, medium-scale, and fine grids,
respectively.  For the three-dimensional case we analogously use $31^3$, $63^3$, and $127^3$ points
in space, as we are limited by the runtime requirements of solving the largest problems.  As in the
elliptic setting, we observe an artificial inflation of the Richardson rate in three dimensions.

In \cref{tab:td_its} we give the CG results for a single time-step with random RHS, i.e., the
results for a single linear system.  In both two dimensions and three dimensions, we use the
preconditioner described in \cref{sec:discretization} applied with a modified fast Poisson solver.
Compared to the elliptic setting, we see that the time-dependent system matrix is better conditioned
and thus preconditioning for $\alpha=1.25$ is not necessary in two dimensions and not helpful in
three dimensions. However, for larger $\alpha$ there is a clear benefit.

We remark that in practical settings with multiple time steps the number of iterations is reduced
slightly from the current setting because the old solution $\v{u}^{(k)}$ can be used as an initial
guess for the solution $\v{u}^{(k+1)}$, but the difference is not substantial in general.

\begin{table}
  \small
  \centering
  \ra{1.0}
  \caption{Grid error estimates $R_2$ and $R_\infty$ for the solution at time $T=0.25$ to the parabolic problem \eqref{eq:fracdiff} in both two and three dimensions as described in \cref{sec:para}. \label{tab:richpara}}
  \begin{tabular}{@{}@{\extracolsep{-1pt}}lcccc@{}}\toprule
    \multicolumn{1}{c}{}&\multicolumn{2}{c}{2D} & \multicolumn{2}{c}{3D}\\
    \cmidrule(lr){2-3} \cmidrule(lr){4-5} 
    \multicolumn{1}{c}{$\alpha$} & $R_2$ &\multicolumn{1}{c}{$R_\infty$} &\multicolumn{1}{c}{$R_2$} &$R_\infty$ \\\midrule
    $1.25$   & $0.97$  & $0.87$& $2.22$
    & $3.11$ 
    \\
    $1.50$  & $0.97$ & $0.82$ & $2.30$
    & $3.24$ \\
    $1.75$  & $0.97$  & $0.85$& $3.02$
    & $3.39$\\
    \bottomrule
  \end{tabular}

\end{table}

\begin{table}
  \small
  \ra{1.0}
  \centering
  \caption{Runtime $t_\text{CG}$ and number of iterations $n_\text{CG}$ required to perform a single
    time step for the time-dependent example using CG with/without preconditioning in both two and three dimensions.  In all cases we use
    $\epsilon_\text{res}=10^{-9}$.  \label{tab:td_its}}
  \begin{tabular}{@{}cccccccc@{}}\toprule
    \multicolumn{1}{c}{}&\multicolumn{3}{c}{2D} & \multicolumn{3}{c}{3D}\\
    \cmidrule(lr){2-4} \cmidrule(lr){5-7} 
    $\alpha$ & $N$ & \multicolumn{1}{c}{$t_\text{CG}$} & \multicolumn{1}{c}{$n_\text{CG}$}  &  \multicolumn{1}{c}{$N$}&\multicolumn{1}{c}{$t_\text{CG}$} & \multicolumn{1}{c}{$n_\text{CG}$}\\  
    \midrule[\heavyrulewidth]
    \multirow{4}{*}{$1.25$}
    &$255^2$ & $\scinote{5.4}{-}{1} \,/\, \scinote{6.3}{-}{1}$ & $ 12\,/\, 25 $ 
    &$63^3$& $\scinote{2.9}{+}{0} \,/\, \scinote{2.1}{+}{0}$ &  $11 \,/\, 25$  \\
    
    &$511^2$  &$\scinote{2.1}{+}{0} \,/\, \scinote{2.7}{+}{0}$ & $ 12 \,/\, 27 $
    &$127^3$& $\scinote{3.5}{+}{1} \,/\, \scinote{4.5}{+}{1}$ &  $12 \,/\, 28 $ \\
    
    &{$1023^2$} &$\scinote{9.4}{+}{0} \,/\, \scinote{1.5}{+}{1}$ & $ 12 \,/\, 29  $
    &$255^3$& $\scinote{4.7}{+}{2} \,/\, \scinote{5.8}{+}{2}$ & $ 12\,/\,30 $
    \\\cmidrule(lr){2-4} \cmidrule(lr){5-7}
&Rate:&$1.0\,/\, 1.1$&*&Rate:&$1.2\,/\,1.3$&*\\
  \midrule[\heavyrulewidth]
        \multirow{4}{*}{$1.50$}
    &$255^2$  & $\scinote{5.4}{-}{1} \,/\, \scinote{1.4}{+}{0}$ & $ 12\,/\,56 $
    &$63^3$& $\scinote{2.7}{+}{0} \,/\, \scinote{3.9}{+}{0}$ &  $10 \,/\, 47 $ 
    \\
    &$511^2$ & $\scinote{2.3}{+}{0} \,/\, \scinote{6.5}{+}{0}$ & $ 13 \,/\, 67$
    &$127^3$& $\scinote{3.8}{+}{1} \,/\, \scinote{7.7}{+}{1}$ &  $ 12\,/\, 60 $
    
    \\
    &$1023^2$ & $\scinote{1.1}{+}{1} \,/\, \scinote{4.3}{+}{1}$ & $ 14 \,/\, 79  $
    &$255^3$&  $\scinote{5.0}{+}{2} \,/\, \scinote{1.4}{+}{3}$ & $ 13\,/\, 73 $
    \\\cmidrule(lr){2-4} \cmidrule(lr){5-7}
&Rate:&$1.1\,/\, 1.2$&*&Rate:&$1.2\,/\, 1.4$&*\\   \midrule[\heavyrulewidth]    \multirow{4}{*}{$1.75$}
    &$255^2$  & $\scinote{4.4}{-}{1} \,/\, \scinote{2.8}{+}{0}$ & $ 10 \,/\, 121$
    &$63^3$& $\scinote{2.2}{+}{0} \,/\, \scinote{6.5}{+}{0}$ &  $8\,/\,81 $
    \\
    &$511^2$ & $\scinote{1.9}{+}{0} \,/\, \scinote{1.5}{+}{1}$ & $11 \,/\, 162$
    &$127^3$& $\scinote{2.9}{+}{1} \,/\, \scinote{1.6}{+}{2}$ &  $ 9\,/\,122 $
    
    \\
    &{$1023^2$} &$\scinote{9.3}{+}{0} \,/\, \scinote{1.2}{+}{2}$ & $ 12 \,/\, 212 $ 
    &$255^3$&$\scinote{4.1}{+}{2} \,/\, \scinote{3.2}{+}{3}$ & $ 10\,/\, 170$ 
    \\\cmidrule(lr){2-4} \cmidrule(lr){5-7}
&Rate:&$1.1\,/\, 1.4$&*&Rate:&$1.2\,/\, 1.5$&*\\
    \bottomrule
  \end{tabular}
\end{table}

\section{Conclusions}
\label{sec:Conclusions}

We introduced a simple discretization scheme for the fractional Laplacian operator in one, two, and
three dimensions based on singularity subtraction combined with the regularly-spaced trapezoidal
rule.  When applied to sufficiently smooth functions $u$, the resulting discretization is provably
second-order accurate in the grid spacing $h$, whereas for rougher $u$ we observe first-order
accuracy in the $\ell_2$ solution error.

When the order $\alpha$ of the fractional Laplacian is close to two, the discrete operator is ill
conditioned, reflecting the underlying ill-conditioning of the continuous (integer-order) Laplacian.
To efficiently solve linear systems with the discrete fractional Laplacian, we demonstrated the
utility of a simple preconditioning scheme based on fast Poisson solvers.

For higher-order schemes, it is necessary to forsake the simplicity of our approach to more
precisely handle solutions $u$ that exhibit only fractional-order smoothness near the boundary of
$\Omega$ for both \eqref{eq:fracdiff} and \eqref{eq:fracelliptic}.  While we intend to pursue this
in future work, we have shown that the scheme presented here provides a fast, simple alternative for
situations amenable to lower-order accuracy.

\bibliographystyle{siamplain} \bibliography{fl}
\label{LastPage}
\end{document}